\newcommand{\papertitle}{Exploiting Negative Curvature in Conjunction with Adaptive Sampling: Theoretical Results and a Practical Algorithm}
\newcommand{\paperauthora}{Albert S. Berahas}
\newcommand{\paperauthoraaffiliation}{Department of Industrial and Operations Engineering, University of Michigan}
\newcommand{\paperauthorb}{Wanping Dong}
\newcommand{\paperauthorc}{Raghu Bollapragada}
\newcommand{\paperauthorcaffiliation}{Operations Research and Industrial Engineering Program, University of Texas at Austin}
\begin{document}
\title{\papertitle}

\author{\paperauthora \footnotemark[2] \footnotemark[1]
   \and \paperauthorc\footnotemark[3]\and \paperauthorb\footnotemark[2]\ }

\maketitle

\renewcommand{\thefootnote}{\fnsymbol{footnote}}
\footnotetext[2]{\paperauthoraaffiliation. (\url{\paperauthoraemail},\url{\paperauthorbemail})}
\footnotetext[3]{\paperauthorcaffiliation. (\url{\paperauthorcemail})}
\footnotetext[1]{Corresponding author.}
\renewcommand{\thefootnote}{\arabic{footnote}}

\begin{abstract}{
In this paper, we propose algorithms that exploit negative curvature for solving noisy nonlinear nonconvex unconstrained optimization problems. We consider both deterministic and stochastic inexact settings, and develop two-step algorithms that combine directions of negative curvature and descent directions to update the iterates. Under reasonable assumptions, we prove second-order convergence results and derive complexity guarantees for both settings. To tackle large-scale problems, we develop a practical variant that utilizes the conjugate gradient method with negative curvature detection and early stopping to compute a step, a simple adaptive step size scheme, and a strategy for selecting the sample sizes of the gradient and Hessian approximations as the optimization progresses. Numerical results on two machine learning problems showcase the efficacy and efficiency of the practical method.

\end{abstract}


\section{Introduction} \label{sec.intro}

In this paper, we consider the unconstrained optimization problem
\begin{equation}\label{prob.minf}
    \min_{x \in \R{n}} f(x)
\end{equation}
where $f:\R{n} \to \R{}$ is twice continuously differentiable and possibly nonconvex. We consider settings in which we only have deterministic or stochastic inexact estimates of the objective function and its associated derivatives. 
In the deterministic inexact setting, estimates of the objective function and its associated derivatives are contaminated with deterministic and bounded noise. 
In the stochastic setting, $f(x) = \mathbb{E}_{\xi} [F(x,\xi)]$, where $\xi$ is a random variable with associated probability space $(\Xi, \mathcal{F}, \Prob)$ and $F: \R{n} \times \Xi \to \R{}$. Deterministic and stochastic problems of this form arise in many applications, e.g., energy systems \cite{dincer2017optimization, wood2013power}, robotics \cite{lavalle2006planning, lengagne2013generation}, engineering design \cite{bendsoe2013topology, rao2019engineering}, signal processing \cite{haykin2002adaptive, proakis2007digital}, chemical engineering \cite{biegler2010nonlinear, smith2005chemical}, and machine learning \cite{goodfellow2016deep, jolliffe2002principal}. Nonconvex optimization problems are generally NP-hard~\cite{nocedal1999numerical, pardalos1991quadratic} as they can have multiple local minima/maxima and saddle points, making it difficult for optimization algorithms to navigate the landscape and distinguish between global and local optima. The situation only becomes more complicated in inexact settings. Thus, we focus on developing algorithms whose goal is to find second-order stationary points, i.e., points $x \in \R{n}$ that satisfy
\begin{equation*}
    \nabla f(x) = 0 \text{ and } \nabla^2 f(x) \succeq 0.
\end{equation*}
Specifically, our goal is to find ($\epsilon_g,\epsilon_H$)-second-order stationary points, i.e., points $x \in \R{n}$ that satisfy
\begin{equation} \label{goal.stationary}
    \|\nabla f(x)\|_2 \leq \epsilon_g \quad \text{and} \quad \nabla^2 f(x) \succeq -\epsilon_H I,
\end{equation}
where $\epsilon_g>0$ and $\ \epsilon_H > 0$; see e.g.,~\cite{murty1987some, jin2017escape,royer2018complexity,cao2023first}).

Numerous gradient-based first- and second-order algorithms have been proposed to solve such problems; see e.g., \cite{nocedal1999numerical,bertsekas1997nonlinear,bottou2018optimization} and the references therein. Most of the proposed methods for the inexact settings are first-order, i.e., they utilize only gradient information, and only guarantee convergence to first-order stationary points and/or struggle to escape from saddle points \cite{du2017gradient}. To guarantee higher-order (second-order) convergence, one needs to utilize some form of curvature (second-order) information \cite{nocedal1999numerical}. Examples of algorithms that exploit directions of negative curvature, i.e., directions that reflect the negative definiteness of the Hessian matrix, can be found in~\cite{curtis2019exploiting, forsgren1995computing, liu2017noisy, more1979use, reddi2018generic}. These methods converge to second-order stationary points, escape from saddle points efficiently, and are endowed with fast local convergence guarantees \cite{curtis2019exploiting, liu2017noisy}.

The information contained in the negative eigenvalues of the Hessian matrix is the centerpiece of negative curvature methods. A simple example is the modified Newton method \cite{fletcher1977modified}, developed to explicitly avoid directions of negative curvature by appropriately modifying the Hessian matrix. In 
\cite{more1979use, goldfarb1980curvilinear, forsgren1995computing} the authors proposed algorithms that employ a linear combination of descent and negative curvature directions to update the iterate and established convergence guarantees to second-order stationary points. Similar convergence guarantees are derived for methods that utilize one type of step (descent or negative curvature) successively, until the iterates reach one of the two conditions of an approximate second-order stationary point \eqref{goal.stationary}, and then switch to the other type of step \cite{carmon2018accelerated, li2023randomized,liu2018adaptive}. While such methods are simple and convergent, they often have additional hyper-parameters and are often inferior in practice. 
In \cite{curtis2019exploiting} a two-step methodology was proposed where both descent and negative curvature steps are taken at every iteration. Adaptive schemes for selecting the type of step have also been proposed \cite{curtis2019exploiting,royer2018complexity,steihaug1983conjugate}. Specifically, in \cite{curtis2019exploiting} the type of step at a given iteration is determined by comparing the potential progress of the two steps, and, in \cite{royer2018complexity, steihaug1983conjugate} the conjugate gradient (CG) method with negative curvature detection and early stopping is used as a subroutine to compute a step.

While the benefits of utilizing directions of negative curvature are many, they are less studied and less utilized as compared to gradient-based (first-order) methods. This is primarily because these methods are more complex and that directions of negative curvature do not come for free and can be expensive \cite{nocedal1999numerical, zhang2020symmetry}. The situation invariably becomes more challenging in the inexact (deterministic and stochastic) settings. In this paper, we design and analyze two-step negative curvature methods for the inexact settings, and develop a practical variant that consists of three main components: 
(1) an adaptive sampling strategy that dictates the accuracy in the approximations employed (to develop practical and convergent algorithms); (2) the matrix-free 
CG method with negative curvature detection and early stopping (to efficiently compute search directions); and (3) a simple strategy for dynamically selecting the step size (to alleviate the need for tuning hyperparameters). In the remainder of the introduction, we discuss the literature most closely related to our work and outline the three main components of our proposed practical approach.

Several works propose negative curvature method with exact gradient and Hessian information \cite{carmon2018accelerated, fletcher1977modified, forsgren1995computing, goldfarb1980curvilinear, more1979use, reddi2018generic, royer2018complexity}. In \cite{liu2017noisy, xu2020newton}, the authors develop negative curvature methods in the exact gradient and (deterministic) inexact Hessian setting. In the former, the authors propose an algorithm that selects between an inexact negative curvature direction and an exact gradient direction at every iteration and prove that this approach converges to an ($\epsilon_g$, $\epsilon_H$)-second-order stationary point, and in the latter, the authors propose cubic regularization and trust-region methods with inexact Hessians and exact gradients that enjoy the same iteration complexity as their deterministic counterparts (exact gradients and Hessians). We note that these methods require strong conditions on the Hessian approximations employed in order to obtain sufficiently accurate directions of negative curvature. Moreover, both of these methods make use of the exact gradient in order to ensure that the directions are indeed descent directions. 
In the stochastic setting, methods that leverage directions of negative curvature \cite{curtis2019exploiting, liu2017noisy, liu2018adaptive} are even less studied. These works impose conditions analogous to the classical conditions for the stochastic gradient (SG) method, 
and as a result, provide convergence guarantees analogous to those of the SG method \cite[Section 4]{bottou2018optimization} and do not provide any second-order convergence guarantees. The randomized algorithm proposed in \cite{li2023randomized} selects the directions of negative curvature using a randomized approach (coin flipping) and is endowed with expected and high probability second-order complexity guarantees. 
That said, the results rely on strong conditions on the gradient and Hessian approximations, and are agnostic to the accuracy in the gradient approximations utilized. 

To tackle large-scale problems and develop a practical algorithm that utilizes directions of negative curvature, we leverage adaptive sampling techniques~\cite{byrd2012sample}, the power of the matrix-free CG~\cite{hestenes1952methods} method with negative curvature detection and early stopping~\cite{royer2020newton}, and a simple adaptive line search procedure~\cite{bollapragada2018adaptive}. At every iteration, our method requires an estimate of the gradient and Hessian whose accuracy is dictated by an adaptive sampling mechanism. The idea is simple yet powerful; adjust the accuracy in the approximations employed as required over the course of the optimization. 
The key to such approaches is the mechanism and rule for the selection of the accuracy. In the unconstrained stochastic setting, adaptive sampling algorithms have proven successful, e.g., norm condition \cite{carter1991global,byrd2012sample,berahas2021global} and inner-product test \cite{bollapragada2018adaptive,bollapragada2018progressive}. The accuracy (samples sizes) in the approximations employed in these methods are dictated by variance estimates that ensure sufficient decrease 
in expectation. We note that in \cite[Section 5]{byrd2012sample} a practical Newton-CG 
algorithm  that uses adaptive sampling in computing  function, gradient, and Hessian information is proposed for strongly convex problems (no negative curvature). Given gradient and Hessian estimates, our method also uses the CG method as a subroutine with negative curvature detection and early stopping. The reasons for this choice are threefold: $(i)$ good for large-scale as it can be implemented matrix-free; $(ii)$ easy to incorporate gradient and Hessian approximations; and, $(iii)$ each iteration of CG can check and detect possible direction of negative curvature at no additional cost. As a result the subroutine terminates either with an approximation to Newton's direction or a direction of negative curvature. Some examples of methods that utilize the CG method as a subroutine are \cite{byrd2011use,byrd2012sample,royer2020newton,berahas2020investigation}. The final component is a dynamic step size procedure, similar to that utilized in \cite{bollapragada2018progressive, friedlander2012hybrid, beck2009fast}, which is well suited for the adaptive sampling setting.

The paper is organized as follows. We conclude this section by summarizing our contributions and introducing the notation used throughout the paper. In Section~\ref{sec.inexact}, we consider the deterministic inexact setting, provide conditions on the approximations computed and the search directions employed, and establish convergence guarantees. In Section \ref{sec.stoch_setting}, we pivot to the stochastic setting, provide analogous conditions to those in Section~\ref{sec.inexact}, and establish convergence guarantees in expectation for constant and diminishing step sizes. We describe our practical algorithm and the key components in Section~\ref{sec.practical}. In Section \ref{sec.num_exp}, we present numerical results on two nonconvex machine learning tasks and illustrate the efficiency and robustness of our approach. Concluding remarks are given in Section~\ref{sec.fin_rem}.

\subsection{Contributions}

Our main contributions are three-fold.
\begin{enumerate}
    \item In the deterministic inexact setting, we propose a two-step algorithmic framework that employs both descent and negative curvature steps. We consider deterministic inexactness conditions on the gradient and Hessian approximations computed and the search directions employed, and establish convergence and iteration complexity guarantees analogous to those in the setting in which exact quantities (gradient and Hessian evaluations) can be computed. Specifically, under constant sufficiently small step sizes choices, the iterates generated 
    converge to second-order stationary points.
    \item In the stochastic setting, we consider the same two-step algorithmic framework under stochastic conditions on the gradient and Hessian approximations and the search directions. We consider conditions in expectation on the gradient approximations, analogous to those imposed on the SG method, as well as conditions in expectation on the Hessian approximations, and provide expected convergence and complexity results for different step size and inexactness conditions. Specifically, in the constant variance and diminishing step size regime we show convergence analogous to that of the SG method, and in the diminishing variance and constant step size regime we show expected convergence to 
    second-order stationary points and provide expected iteration complexity guarantees.
    \item Motivated by the power of utilizing negative curvature information and the success of adaptive sampling strategies in optimization, we propose a practical adaptive sampling negative curvature conjugate gradient method for the large-scale setting. We propose a mechanism for selecting the sample sizes of the gradient and Hessian adaptively as the optimization progresses. The practical method utilizes the matrix-free CG method with negative curvature detection and early termination to compute a step, and a simple dynamic step size selection procedure. The robustness and efficiency of our practical algorithm and the importance of all key components is illustrated on two nonconvex machine learning tasks. 
\end{enumerate}

\subsection{Notation}

Our proposed algorithms generate a sequence of iterates $\{ x_k\}$ with $x_k \in \R{n}$ for all $k \in \N{}$. The algorithms involve directions of negative curvature denoted by $p_k \in \R{n}$ and descent directions denoted by $d_k \in \R{n}$. These search directions are computed based on noisy (deterministic or stochastic) gradient and Hessian approximations, denoted by $g_k \in \R{n}$ and $H_k \in \R{n \times n}$, respectively. The left-most eigenvalues of the matrices $\nabla^2 f(x_k)$ and $H_k$ are denoted by $\lambda_{\min}(\nabla^2 f(x_k))$ and $\lambda_k$, respectively.

\section{Deterministic Inexact Setting} \label{sec.inexact}

In this section, we consider \eqref{prob.minf} in the setting in which only inexact gradient and Hessian approximations with deterministic noise are available. We adapt the two-step algorithmic framework  \cite[Algorithm 1]{curtis2019exploiting}, and extend it to the inexact setting. As the name suggests, the two-step framework takes two types of steps at each iteration: $(i)$ a negative curvature step $p \in \mathbb{R}^n$ when available, where $p^{\mathsf{T}} \nabla^2 f(x_k) p < 0$, and $(ii)$ a descent step $d \in \mathbb{R}^n$, where $d^{\mathsf{T}} \nabla f(x_k) < 0$. 
A generic version of this framework is given in Algorithm \ref{alg.frame.two-step}.
\begin{algorithm}
    \caption{Generic Two-Step Method} \label{alg.frame.two-step}
    \begin{algorithmic}[1]
        \Require{$x_0 \in \Real^n$, $\alpha_k > 0 $, $\beta_k > 0$}
        \ForAll{$k \in \{0, 1, \dots\}$} 
            \State choose negative curvature direction $p_k$ \label{line.negative}
            \State set $\hat{x}_k \leftarrow x_k+ \beta_k p_k$
            \State choose descent direction $d_k$ \label{line.descent}
    	\If{$p_k = d_k=0$} 
        	\Return $x_k$
            \EndIf
            \State set $x_{k+1} \leftarrow \hat{x}_k + \alpha_k d_k = x_k + \beta_k p_k + \alpha_k d_k$
	\EndFor
    \end{algorithmic}
\end{algorithm}

Before we proceed to present and analyze our proposed algorithm, we provide a simple example of negative curvature and descent steps.
\bexample
At iteration $k \in \mathbb{N}$, given the iterate $x_k \in \mathbb{R}^n$, the two steps in Lines~\ref{line.negative} and \ref{line.descent} of Algorithm~\ref{alg.frame.two-step} can be chosen as follows.
\begin{itemize}
    \item \textbf{Negative curvature direction:} If $\nabla^2 f(x_k) \succeq 0$, then $p_k \leftarrow 0$; otherwise, $p_k$ can be chosen as the eigenvector corresponding to the minimum eigenvalue of $\nabla^2 f(x_k)$ such that $p_k^{\mathsf{T}} \nabla^2 f(x_k) p_k < 0$ and $\nabla f(x_k)^Tp_k<0$.
    \item \textbf{Descent direction:} The descent direction can be set as $d_k \leftarrow - W_k \nabla f(\hat{x}_k)$ for some $w_1 I \preceq W_k \preceq w_2 I$ where $0 < w_1 \leq w_2$.
\end{itemize}
Using these conditions, one can show that the iterates generated by Algorithm \ref{alg.frame.two-step}, with appropriately chosen step size sequences, converge to second-order stationary points \cite{curtis2019exploiting}. Note that in Algorithm \ref{alg.frame.two-step}, descent steps are taken at every iteration, whereas negative curvature directions are only taken when negative curvature is present.
\eexample

With the two-step algorithmic framework in mind, the remainder of this section is structured as follows. Conditions for the two directions ($p_k$ and $d_k$) are provided in Section \ref{subsec.inexact.conditions}, and the proposed algorithm is given in Algorithm \ref{alg.inexact.two-step}. The associated convergence and complexity guarantees are provided in Section \ref{subsec.inexact.conv}.

\subsection{Assumptions, Conditions, and Algorithm} \label{subsec.inexact.conditions}

Throughout the paper, we make the following assumption. 
\bassumption \label{asm.function}
The function $f: \R{n} \rightarrow \R{}$ is continuously differentiable, the gradient of $f$ is $L_g$-Lipschitz continuous, and the Hessian of $f$ is $L_H$-Lipschitz continuous, for all $x \in \R{n}$. Moreover, the function $f$ is bounded below by a scalar $\bar{f} \in \R{}$.
\eassumption
\noindent
The above assumption is standard in the literature of methods that exploit negative curvature, and more generally second-order methods; see e.g., \cite{bellavia2020high, curtis2019exploiting, nocedal1999numerical}. 

With regards to the objective function and its associated derivatives, we assume that those quantities are prohibitively expensive to compute at every iteration and that only inexact approximations are available. Under these inexact approximations, the search directions are required to satisfy the following conditions. The first two conditions pertain to the directions of negative curvature, and set certain requirements on the inexact gradient and Hessian approximations ($g_k$ and $H_k$, respectively). 

\bcondition \label{cond.inexact.pk}
For all $k\in \mathbb{N}$, if $\lambda_k \geq 0$ (left-most eigenvalue of $H_k$), the negative curvature direction is $p_k \leftarrow 0$. Otherwise, the negative curvature direction is 
\begin{equation}\label{fom.inexact.pkselect}
    p_k = \begin{cases}
        q_k, & q_k^{\mathsf{T}}g_k \leq 0 \\
        -q_k, & \text{otherwise}
    \end{cases}
\end{equation}
where ${q}_k \in \mathbb{R}^n$ is a negative curvature vector (with respect to the matrix $H_k$) such that
\begin{subequations}\label{fom.inexact.qk}
    \begin{align}
         q_k^{\mathsf{T}} H_k q_k &\leq \gamma \lambda_k \|q_k\|_2^2 < 0, \quad \gamma \in (0, 1] \label{fom.inexact.qk1} \\
         \|q_k\|_2 &= \delta |\lambda_k|, \quad\quad \quad \quad\delta \in (0, \infty).  \label{fom.inexact.qk2}
    \end{align}
\end{subequations}
and $g_k \in \mathbb{R}^n$, the gradient approximation, 
satisfies, 
\begin{equation} \label{fom.inexact.gk}
    |(g_k - \nabla f(x_k))^{\mathsf{T}} q_k| \leq \theta |g_k^{\mathsf{T}} q_k|, \quad \theta \in [0, 1).
\end{equation}
\econdition
\bremark We make a few remarks about Condition~\ref{cond.inexact.pk}. When $\lambda_k \geq 0$, the matrix $H_k$ is positive semi-definite, i.e., there are no directions of negative curvature, and thus $p_k \gets 0$. When $\lambda_k < 0$, a direction that is sufficiently negative definite \eqref{fom.inexact.qk1} with controlled norm \eqref{fom.inexact.qk2} is guaranteed to exist. The sign of the direction of negative curvature is decided via \eqref{fom.inexact.pkselect}, and combined with the gradient accuracy requirement \eqref{fom.inexact.gk} 
ensures that this direction is a descent direction for the true objective function. Inequality \eqref{fom.inexact.gk} controls the accuracy of $g_k$ along the direction $q_k$. Since the exact gradient is not available, one can also select the direction of negative curvature \eqref{fom.inexact.pkselect} via a randomized approach (e.g., coin flipping) as proposed in~\cite{li2023randomized}. However, such approaches are inefficient in adaptive sampling settings where the accuracy in the gradient approximations improves as the optimization progresses, and negative curvature directions can be more accurately computed using the gradient approximation as simply flipping a coin can be wasteful. 
\eremark

Condition \ref{cond.inexact.pk} assumes access to an inexact Hessian approximation $H_k$ to compute the  negative curvature direction $p_k$. The following condition on $H_k$ is required for the analysis.
\bcondition \label{cond.inexact.hk}
For all $k\in \mathbb{N}$, the approximate Hessian $H_k$ and the negative curvature direction $p_k$ satisfy
\begin{equation} \label{fom.inexact.hk}
    \|(H_k - \nabla^2 f(x_k) ) p_k\|_2 \leq \gamma_H |\lambda_k^{-}| \|p_k\|_2, \quad \gamma_H \in [0, \gamma),
\end{equation}
where $\lambda_k^- := \min\{\lambda_k, 0\}$. Moreover, the gap between the left-most eigenvalues of $H_k$ and $\nabla^2 f(x_k)$ is bounded by
\begin{equation} \label{fom.inexact.lbdk}
    |\lambda_k^{-} - \lambda_{\min, k}^-| \leq \gamma_{\lambda} |\lambda_k^{-}|, \quad \gamma_{\lambda} \in [0, 1),
\end{equation}
where $\lambda_{\min, k}^- := \min\{\lambda_{\min}(\nabla^2 f(x_k)), 0 \}$. 
\econdition

\bremark We make a few remarks about Condition~\ref{cond.inexact.hk}. 
\begin{itemize}
    \item The condition in \eqref{fom.inexact.hk} ensures that the Hessian approximation $H_k$ is sufficiently accurate along the negative curvature direction $p_k$, and is weaker than  similar conditions on $\|H_k - \nabla^2 f(x_k)\|_2$ \cite{cartis2018global, li2023randomized}. 
    We note that \eqref{fom.inexact.hk} is equivalent to
    \begin{equation} \label{fom.inexact.hk_q}
        \|(H_k - \nabla^2 f(x_k) ) q_k\|_2 \leq \gamma_H |\lambda_k| \|q_k\|_2.
    \end{equation}
    When $\lambda_k < 0$, by definition $\lambda_k^{-} = \lambda_k$, and \eqref{fom.inexact.hk} is equivalent to $\|(H_k - \nabla^2 f(x_k) ) p_k\|_2 \leq \gamma_H |\lambda_k| \|p_k\|_2$. Otherwise ($\lambda_k \geq 0$), 
    $p_k = 0$ and $\lambda_k^{-} = 0$, and \eqref{fom.inexact.hk} holds trivially. 
    \item The condition in \eqref{fom.inexact.lbdk} ensures that the left-most eigenvalue of the Hessian approximation has the same sign as the left-most eigenvalue of the true Hessian, when the left-most eigenvalue of the true Hessian is negative. The condition is weaker than $|\lambda_k - \lambda_{\min, k} | \leq \gamma_{\lambda} |\lambda_k|$ as it requires the eigenvalues to be sufficiently close only when they are negative. 
\end{itemize}
\eremark

We conclude the discussion related to the negative curvature directions by describing a procedure for computing the negative curvature direction $p_k$. First, a Hessian approximation $H_k$ is formed and a direction of negative curvature (with respect to this matrix) $q(H_k)$ is computed that satisfied \eqref{fom.inexact.qk} (for example, by the matrix-free Lanczos method \cite{royer2020newton}). If $H_k$ and $q(H_k)$ satisfy inequality \eqref{fom.inexact.hk_q}, set $q_k \gets q(H_k)$ and proceed to compute $g_k$ that satisfies \eqref{fom.inexact.gk} and set $p_k$ via \eqref{fom.inexact.pkselect}. If \eqref{fom.inexact.hk_q} is not satisfied, refine the Hessian approximation and compute a new direction $q(H_k)$.

Given a direction of negative curvature, the iterate is updated via $\hat{x}_k \leftarrow x_k + \beta p_k$, where $\beta>0$ is the step size, and the algorithm proceeds to the descent step. At the point $\hat{x}_k$, a descent direction is obtained by approximating the gradient. For simplicity, we assume $d_k \leftarrow - \hat{g}_k$, where $\hat{g}_k$ is a gradient approximation at $\hat{x}_k$ that satisfies the following condition. 
\bcondition \label{cond.inexact.dk}
For all $k\in \mathbb{N}$, the 
gradient approximation $\hat{g}_k$ satisfies 
\begin{equation} \label{fom.inexact.hatgk}
    \|\nabla f(\hat{x}_k)-\hat{g}_k \|_2 \leq \hat{\theta} \|\hat{g}_k\|_2, \quad \hat{\theta} \in [0, 1).
\end{equation}
\econdition
\bremark We make a few remarks about Condition~\ref{cond.inexact.dk}, a variant of the well-established norm condition \cite{carter1991global,byrd2012sample}. This condition guarantees that $d_k= - \hat{g}_k$ is a descent direction with respect to the objective function at $\hat{x}_k$, i.e., $d_k^{\mathsf{T}} \nabla f(\hat{x}_k) < 0$. Moreover, it forces the norm of $d_k$ to be close to the norm of $\nabla f(\hat{x}_k)$. 
We note that our analysis and presentation could be generalized to directions $d_k = - W_k \hat{g}_k$, where $w_1 I \preceq W_k \preceq w_2 I$ and $0 < w_1 \leq w_2$, and  $\|H_k(\nabla f(\hat{x}_k) - \hat{g}_k)\| \leq \hat{\theta} \|H_k \hat{g}_k\|_2$ instead of \eqref{fom.inexact.hatgk}. A similar condition on a general descent direction $d_k$ is imposed in \cite{curtis2019exploiting}
\begin{equation*}
    \begin{split}
        d_k^{\mathsf{T}} \nabla f(x_k) & \geq \nu_1 \|d_k\|_2 \|\nabla f(x_k)\|_2, \quad \nu_1 \in (0, \infty), \\
        \nu_2 \|\nabla f(x_k)\|_2 & \leq \|d_k\|_2 \leq \nu_3 \|\nabla f(x_k)\|_2, \quad \nu_2 \in (0, \infty),\ \nu_3 \in [\nu_2, \infty).
    \end{split}
\end{equation*}
These conditions and \eqref{fom.inexact.hatgk} are closely related, and in fact equivalent for certain parameter settings. We use \eqref{fom.inexact.hatgk} in our presentation and analysis as it forms the basis for our practical algorithm and the adaptive sampling strategy for the gradient approximations.
\eremark

Before we proceed to present, discuss and analyze the complete algorithm (
Algorithm \ref{alg.inexact.two-step}), we acknowledge that Conditions~\ref{cond.inexact.pk}, \ref{cond.inexact.hk}, and~\ref{cond.inexact.dk} are relatively strong. 
That said, we impose them for two main reasons. First, to understand permissible errors while retaining deterministic-type results. And, second, our motivation is to develop practical adaptive sampling methods for which the accuracy in the approximations employed over the course of the optimization can be adjusted, and in those settings the assumptions and conditions above are not unrealistic. For example, consider the finite-sum setting and an algorithm where the accuracy in the gradient and Hessian approximations employed is increased at every iteration.

\begin{algorithm}[h]
    \caption{Two-Step Method}\label{alg.inexact.two-step}
    \begin{algorithmic}[1]
        \Require{$x_0 \in \Real^n$, $\alpha > 0$, $\beta > 0$
        }
        \ForAll{$k \in \{0, 1, \dots\}$} 
	\State compute Hessian approximation $H_k$ that satisfies Condition \ref{cond.inexact.hk}
            \If{$\lambda_k \geq 0$}\label{line.term_negative}
                \State set $p_k \leftarrow 0$ 
            \Else 
                \State compute $q_k$ and set $p_k$ using Condition \ref{cond.inexact.pk}
            \EndIf
            \State set $\hat{x}_k \leftarrow x_k+ \beta p_k$
            \State compute gradient approximation $\hat{g}_k$ that satisfies Condition \ref{cond.inexact.dk}
            \If{$\hat{g}_k = 0$} \label{line.term_descent}
                \State set $d_k \leftarrow 0$
            \Else
                \State set $d_k \leftarrow - \hat{g}_k$
	    \EndIf
    	\If{$p_k = d_k=0$} 
        	\Return $x_k$ \label{line.terminate}
            \EndIf
            \State set $x_{k+1} \leftarrow \hat{x}_k + \alpha d_k = x_k + \beta p_k + \alpha d_k$
	\EndFor
    \end{algorithmic}
\end{algorithm}
\bremark We make a few remarks about Algorithm~\ref{alg.inexact.two-step}. 
\begin{itemize}
    \item \textbf{Step size:} The constant step size choices (for both steps) in Algorithm~\ref{alg.inexact.two-step} are required to be  sufficiently small in order to guarantee convergence. The specific ranges for ensuring second-order convergence depend on user and problem-specific parameters and the exact forms are given in Theorem \ref{thm.inexact.converge}.
    \item \textbf{Termination conditions/Tolerances:} Algorithm~\ref{alg.inexact.two-step} terminates on Line~\ref{line.terminate} with a second-order stationary point. To guarantee convergence to an $(\epsilon_g, \epsilon_H)$-approximate second-order stationary point ($\epsilon_g \geq 0$, $\epsilon_H \geq 0$), one can relax the termination conditions Lines~\ref{line.term_negative} and \ref{line.term_descent} of Algorithm~\ref{alg.inexact.two-step} to $\lambda_k \geq -\frac{\epsilon_H}{1 + \gamma_{\lambda}}$ and $\|\hat{g}_k\|_2 \leq \frac{\epsilon_g}{1+\hat{\theta}}$, respectively. 
    \item \textbf{Computations \& Practicality:} 
    At every iteration, a straightforward implementation of Algorithm~\ref{alg.inexact.two-step} requires a Hessian evaluation and the computation of the minimum eigenvalue of the Hessian matrix to compute the negative curvature step, and a gradient evaluation to compute the descent step. 
    In the large-scale setting, these computations (and in particular the two first computations) can be prohibitively expensive and/or storage intensive. There are multiple ways to compute a direction of  negative curvature. In theory, the most straightforward approach is to set $p_k$ to be the eigenvector corresponding to the minimum eigenvalue of the Hessian matrix. In practice, and to avoid expensive computations, the negative curvature direction can be computed via a matrix-free iterative approach e.g., matrix-free Lanczos method \cite{lanczos1950iteration}. In our practical method (Section~\ref{sec.practical}) we make use of the CG method with negative curvature detection. We note that the negative curvature detection comes at no additional cost.
\end{itemize} 
\eremark

\subsection{Convergence and Complexity Results} \label{subsec.inexact.conv}

In this subsection, we present convergence and complexity guarantees for Algorithm \ref{alg.inexact.two-step} using constant step sizes in the deterministic inexact setting. The main theorem (presented below) shows that the iterates generated by Algorithm \ref{alg.inexact.two-step} with sufficiently small step sizes converge to a second-order stationary point. 

\btheorem\label{thm.inexact.converge}
    Suppose Assumption \ref{asm.function} and Conditions \ref{cond.inexact.pk}, \ref{cond.inexact.hk} and \ref{cond.inexact.dk} hold. Let the step size parameters satisfy 
\begin{align}\label{eq.stepsizeconstant}
        \alpha_k = \alpha \leq \tfrac{(1-\hat{\theta})^2}{L_g(1+\hat{\theta})}, \quad \text{and} \quad  \beta_k = \beta \leq \tfrac{\gamma-\gamma_H}{\delta L_H},
    \end{align}
    where $0 \leq \gamma_H < \gamma$.
    If Algorithm \ref{alg.inexact.two-step} terminates finitely in iteration $k \in \Num$, then $\nabla f(x_k) = 0$ and $\lambda_{\min} (\nabla^2 f(x_k)) \geq 0$, i.e., $x_k$ is a second-order stationary point. Otherwise, 
    \begin{equation} \label{eq.inexact.result}
        \begin{split}
            \lim_{k \to \infty}\| \nabla f(x_k)\|_2 = 0 \quad \text{and} \quad \liminf_{k \to \infty} \lambda_{\min} (\nabla^2 f(x_k)) \geq 0.
        \end{split}
    \end{equation}
\etheorem
\bproof
The two-step method terminates finitely if and only if, for some $k \in \Num_{+}$, $p_k = d_k = 0$. In this case, $\lambda_k \geq 0$ and $d_k = 0$, and $x_k = \hat{x}_k$ and $\nabla f(x_k) = \nabla f(\hat{x}_k)$.
    By Condition \ref{cond.inexact.dk} and $d_k = - \hat{g}_k$,
    \begin{equation*}
        \begin{split}
            \|-d_k - \nabla f(\hat{x}_k) \|_2 \leq \hat{\theta} \|d_k\|_2 = 0 \quad \Rightarrow \quad \nabla f(x_k) = \nabla f(\hat{x}_k) = d_k = 0.
        \end{split}
    \end{equation*}
    Similarly, by \eqref{fom.inexact.lbdk}, it follows that $\lambda_{\min, k} \geq 0$. Therefore, $x_k$ is a second-order stationary point. 
    
    If the algorithm does not terminate finitely, consider an arbitrary $k \in \Num$. If $\lambda_k \geq 0$, then $\lambda_k^{-} := \min\{\lambda_k, 0\} = 0$ and $p_k = 0$, and $\hat{x}_k = x_k$ and $f(\hat{x}_k) = f(x_k)$. Otherwise ($\lambda_k < 0$), $p_k \neq 0$ and $\lambda_k^{-} = \lambda_k$. By Assumption \ref{asm.function}, it follows that
    \begin{equation}\label{eq.taylor_negative1}
    \begin{aligned}
        f(x_k+\beta p_k) &\leq f(x_k) + \beta \nabla f(x_k)^{\mathsf{T}} p_k + \tfrac{1}{2} \beta^2 p_k^{\mathsf{T}} \nabla^2 f(x_k) p_k + \tfrac{L_H}{6} \beta^3 \|p_k\|_2^3\\
        & \leq f(x_k) + \tfrac{1}{2} \beta^2 p_k^{\mathsf{T}} \nabla^2 f(x_k) p_k + \tfrac{L_H}{6} \beta^3 \|p_k\|_2^3 \\
        & = f(x_k) + \tfrac{1}{2} \beta^2 p_k^{\mathsf{T}} H_k p_k + \tfrac{1}{2} \beta^2 p_k^{\mathsf{T}} (\nabla^2 f(x_k) - H_k) p_k + \tfrac{L_H}{6} \beta^3 \|p_k\|_2^3  \\
        & \leq f(x_k) + \tfrac{1}{2} \beta^2 \gamma \lambda_k \|p_k\|_2^2 + \tfrac{1}{2} \beta^2 \|p_k\|_2 \left\|(\nabla^2 f(x_k)- H_k) p_k\right\|_2 + \tfrac{L_H}{6} \beta^3 \|p_k\|_2^3  \\
        & \leq f(x_k) - \tfrac{1}{2} \beta^2 \gamma |\lambda_k| \|p_k\|_2^2 + \tfrac{1}{2} \beta^2 \gamma_H |\lambda_k^{-}| \|p_k\|_2^2 + \tfrac{L_H}{6} \beta^3 \|p_k\|_2^3 \\
        & = f(x_k) - \tfrac{1}{2} (\gamma-\gamma_H - \tfrac{L_H}{3} \beta \delta) \beta^2 \delta^2 |\lambda_k|^3 \\
        & \leq f(x_k) - \tfrac{1}{3} (\gamma-\gamma_H) \beta^2 \delta^2 |\lambda_k|^3 
    \end{aligned}
    \end{equation}
    where the second inequality is by \eqref{fom.inexact.pkselect} and \eqref{fom.inexact.gk}, the third inequality is by \eqref{fom.inexact.qk1}, the fourth inequality is 
    by \eqref{fom.inexact.hk}, the equality is by \eqref{fom.inexact.qk2}, and the last inequality is by the negative curvature step constant step size $\beta$ choice. In both cases ($\lambda_k \geq 0$ and $\lambda_k < 0$) inequality  \eqref{eq.taylor_negative1} holds, and for all $k \in \mathbb{N}$, \begin{align}\label{eq.taylor_negative}
        f(\hat{x}_k) = f(x_k + \beta p_k) & \leq f(x_k) - \tfrac{1}{3} \beta^2 (\gamma-\gamma_H) \delta^2 |\lambda_k^-|^3 \nonumber \\
        & \leq f(x_k) - \tfrac{\beta^2 (\gamma-\gamma_H) \delta^2}{3 (1+ \gamma_{\lambda})^3} |\lambda_{\min,k}^-|^3,
    \end{align}
    where $\lambda_k^-$ and $\lambda_{\min,k}^{-}$ are defined in Condition~\ref{cond.inexact.hk}, 
    and the last inequality follows by \eqref{fom.inexact.lbdk} and the fact that when $\lambda_k \geq 0$, $\lambda_k^- = \lambda_{\min,k}^- = 0$.

    With regards to the descent step $x_{k+1} \leftarrow \hat{x}_k + \alpha d_k$, by Assumption~\ref{asm.function}, 
    \begin{align}\label{eq.taylor_descent}
            f(x_{k+1}) & \leq f(\hat{x}_k) + \alpha \nabla f(\hat{x}_k)^{\mathsf{T}} d_k + \alpha^2 \tfrac{L_g}{2}   \|d_k\|_2^2 \nonumber \\
            & \leq f(\hat{x}_k) - \tfrac{\alpha}{1+\hat{\theta}} \|\nabla f(\hat{x}_k)\|_2^2 + \alpha^2\tfrac{L_g }{2(1-\hat{\theta})^2} \|\nabla f(\hat{x}_k)\|_2^2 \nonumber \\
            & = f(\hat{x}_k) - \tfrac{\alpha}{1+\hat{\theta}} \left(1-\tfrac{L_g (1+\hat{\theta})}{2(1-\hat{\theta})^2} \alpha \right) \|\nabla f(\hat{x}_k)\|_2^2 \nonumber \\
            & \leq f(\hat{x}_k) - \tfrac{\alpha}{2(1+\hat{\theta})} \|\nabla f(\hat{x}_k)\|_2^2,
    \end{align}
    where the second inequality is by \eqref{fom.inexact.hatgk}, and the last inequality is by the constant step size choice associated with the descent step \eqref{eq.stepsizeconstant}. 

    Combining inequalities \eqref{eq.taylor_negative} and \eqref{eq.taylor_descent},  \begin{equation}\label{eq.combine}
            f(x_{k+1}) \leq f(x_k) - \tfrac{\beta^2 (\gamma-\gamma_H) \delta^2}{3 (1+ \gamma_{\lambda})^3}  |\lambda_{\min,k}^-|^3 - \tfrac{\alpha}{2(1+\hat{\theta})} \|\nabla f(\hat{x}_k)\|_2^2.
    \end{equation}
     Summing \eqref{eq.combine} from $k = 0$ to $K$, 
    \begin{equation*}
            \tfrac{\beta^2 (\gamma-\gamma_H) \delta^2}{3(1+ \gamma_{\lambda})^3}  \sum_{k=0}^{K} |\lambda_{\min,k}^-|^3 + \tfrac{\alpha}{2(1 + \hat{\theta})} \sum_{k=0}^{K} \|\nabla f(\hat{x}_k)\|_2^2 \leq f(x_0) - f(x_{K+1}) \leq f(x_0) - \bar{f} < \infty.
    \end{equation*}
    Taking the limit $K \rightarrow \infty$, it follows that 
    \begin{align} \label{Eq: limitresult1}
        \sum_{k=0}^{\infty} |\lambda_{\min,k}^-|^3 < \infty \quad \text{ and } \quad \sum_{k=0}^{\infty} \|\nabla f(\hat{x}_k)\|_2^2 < \infty.
    \end{align}
    The latter bound yields
    \begin{equation}\label{eq.limgrad}
        \lim_{k \to \infty} \|\nabla f(\hat{x}_k)\|_2 = 0.
    \end{equation}
    For the former bound \eqref{Eq: limitresult1}, it follows that
    \begin{equation}\label{eq.implies}
        \begin{split}
            \lim_{k \to \infty} |\lambda_{\min,k}^-|^3 = 0 \quad  \Rightarrow \quad \lim_{k \to \infty} \lambda_{\min,k}^- = 0 \quad 
             \Rightarrow \quad \liminf_{k \to \infty} \lambda_{\min}(\nabla^2 f(x_k)) \geq 0.
        \end{split}
    \end{equation}
    The last statement can be proven by contradiction. If $\liminf_{k \to \infty} \lambda_{\min}(\nabla^2 f(x_k)) < 0$, then there exists a constant $c < 0$ such that $\liminf_{k \to \infty} \lambda_{\min}(\nabla^2 f(x_k)) \leq c$. It follows that there is a subsequence of $\{\lambda_{\min}(\nabla^2 f(x_k))\}_{k=1}^{\infty}$ represented as $\{\lambda_{\min}(\nabla^2 f(x_{k_i}))\}_{i=1}^{\infty}$ satisfying $\lambda_{\min}(\nabla^2 f(x_{k_i})) \leq c < 0$ for all $i\in \N{}$. By the definition of $\lambda_{\min,k}^-$, we have $\lambda_{\min,k_i}^- = \lambda_{\min}(\nabla^2 f(x_{k_i})) \leq c < 0$ and $0 = \lim_{i \to \infty} \lambda_{\min,k_i}^- \leq c < 0$ which leads to a contradiction. Thus, the second limit in \eqref{eq.inexact.result} holds.
    
    With regards to the former limit in \eqref{eq.inexact.result}, by 
    \eqref{fom.inexact.qk2} and \eqref{fom.inexact.lbdk},
    \begin{align*}
            \sum_{k=0}^{\infty} \|\hat{x}_k - x_k \|_2^3  = \beta^3 \sum_{k=0}^{\infty} \|p_k \|_2^3 & = \beta^3 \sum_{k=0}^{\infty} \|q_k \|_2^3 \\
            & = \beta^3 \delta^3 \sum_{k=0}^{\infty} |\lambda_k^- |^3 \\
            & \leq \tfrac{\beta^3 \delta^3}{(1-\gamma_{\lambda})^3}  \sum_{k=0}^{\infty} |\lambda_{\min,k}^-|^3 < \infty
        \end{align*}
    from which it follows that $\lim_{k \to \infty} \|\hat{x}_k - x_k \|_2 = 0$. By Assumption~\ref{asm.function} and \eqref{eq.limgrad}, 
    \begin{equation*}
        \begin{split}
            0  \leq \limsup_{k \to \infty} \|\nabla f(x_k)\|_2 & = \limsup_{k \to \infty} \|\nabla f(x_k)- \nabla f(\hat{x}_k) + \nabla f(\hat{x}_k)\|_2 \\
            & \leq \limsup_{k \to \infty} \|\nabla f(x_k)- \nabla f(\hat{x}_k)\|_2 + \limsup_{k \to \infty} \|\nabla f(\hat{x}_k)\|_2 \\
            & \leq L_g \limsup_{k \to \infty} \| x_k- \hat{x}_k\|_2 + \limsup_{k \to \infty} \|\nabla f(\hat{x}_k)\|_2 = 0
        \end{split}
    \end{equation*}
    which implies the first limit in \eqref{eq.inexact.result}.
\eproof

Theorem~\ref{thm.inexact.converge} shows that the iterates generated by Algorithm~\ref{alg.inexact.two-step} converge to second-order stationary points. This is expected under the stated assumptions and conditions, and matches the convergence results of the deterministic two-step method up to constants related to the gradient and Hessian approximations, and the two search directions. 
As a corollary to Theorem~\ref{thm.inexact.converge}, we derive the following iteration complexity result.
\begin{corollary}\label{cor.complex1}
Consider any scalars $\epsilon_g > 0$ and $\epsilon_H>0$. With respect to Algorithm \ref{alg.inexact.two-step}, the cardinality of the index set $\mathcal{G}(\epsilon_g) := \{k \in \mathbb{N}: \|\nabla f({x}_{k}) \|_2 > \epsilon_g \}$ 
is at most $\mathcal{O}(\epsilon_g^{-2})$, and the cardinality of the index set $\mathcal{H}(\epsilon_H) := \{k \in \mathbb{N}: |\lambda_{\min,k}^-| > \epsilon_H \}$
is at most $\mathcal{O}(\epsilon_H^{-3})$. Hence, the number of iterations and derivative (i.e., gradient and Hessian) evaluations required until iteration $k \in \mathbb{N}$ is reached with $\|\nabla f(x_k) \|_2 \leq \epsilon_g$ and $\lambda_{\min}(\nabla^2 f(x_k)) \geq -\epsilon_H$ 
is at most $\mathcal{O}(\max\{\epsilon_g^{-2}, \epsilon_H^{-3}\})$. 
\end{corollary}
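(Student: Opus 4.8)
The plan is to read the two complexity counts straight off the summable series produced inside the proof of Theorem~\ref{thm.inexact.converge}. Telescoping the per-iteration decrease \eqref{eq.combine} from $k=0$ to $K$ and using $f(x_{K+1}) \ge \bar{f}$ bounds two nonnegative series by $f(x_0)-\bar{f}$: writing $S_H := \tfrac{3(1+\gamma_\lambda)^3}{\beta^2(\gamma-\gamma_H)\delta^2}(f(x_0)-\bar{f})$ and $S_g := \tfrac{2(1+\hat{\theta})}{\alpha}(f(x_0)-\bar{f})$, we have $\sum_k |\lambda_{\min,k}^-|^3 \le S_H$ and $\sum_k \|\nabla f(\hat{x}_k)\|_2^2 \le S_g$. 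The only device I need beyond these is the elementary observation that a nonnegative series with finite sum $S$ can have at most $S/\tau$ terms that exceed a threshold $\tau > 0$.

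For the Hessian set, I would apply this observation to $\sum_k |\lambda_{\min,k}^-|^3$: each $k$ with $|\lambda_{\min,k}^-| > \epsilon_H$ contributes a term exceeding $\epsilon_H^3$, hence $|\mathcal{H}(\epsilon_H)| \le S_H \epsilon_H^{-3} = \mathcal{O}(\epsilon_H^{-3})$. Because $\lambda_{\min,k}^- = \min\{\lambda_{\min}(\nabla^2 f(x_k)),0\}$, the condition $|\lambda_{\min,k}^-| > \epsilon_H$ is identical to $\lambda_{\min}(\nabla^2 f(x_k)) < -\epsilon_H$, which is precisely the second-order measure appearing in the statement.

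For the gradient set, the same observation applied to $\sum_k \|\nabla f(\hat{x}_k)\|_2^2$ yields at most $S_g \epsilon_g^{-2} = \mathcal{O}(\epsilon_g^{-2})$ indices with $\|\nabla f(\hat{x}_k)\|_2 > \epsilon_g$. The step I expect to be the main obstacle is reconciling this with the statement, which is phrased through $\nabla f(x_k)$ rather than $\nabla f(\hat{x}_k)$. I would close the gap using $\hat{x}_k = x_k + \beta p_k$: at iterations with $\lambda_k \ge 0$ one has $p_k = 0$ and $x_k = \hat{x}_k$ exactly, while at iterations with $\lambda_k < 0$, Assumption~\ref{asm.function} together with \eqref{fom.inexact.qk2} and \eqref{fom.inexact.lbdk} give $\|\nabla f(x_k)-\nabla f(\hat{x}_k)\|_2 \le L_g \beta \|p_k\|_2 = L_g \beta \delta |\lambda_k^-| \le \tfrac{L_g\beta\delta}{1-\gamma_\lambda}|\lambda_{\min,k}^-|$, so the discrepancy between the two gradient norms is slaved to the separately counted Hessian quantity. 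This is the delicate point, since it is what permits certifying $\|\nabla f(x_k)\|_2 \le \epsilon_g$ (and not merely its analogue at $\hat{x}_k$) at a near-stationary iterate, where $|\lambda_{\min,k}^-|$ is already forced to be small.

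The joint bound then follows by complementation. Any index $k$ outside $\mathcal{G}(\epsilon_g) \cup \mathcal{H}(\epsilon_H)$ satisfies both $\|\nabla f(x_k)\|_2 \le \epsilon_g$ and $\lambda_{\min}(\nabla^2 f(x_k)) \ge -\epsilon_H$; since $|\mathcal{G}(\epsilon_g) \cup \mathcal{H}(\epsilon_H)| \le |\mathcal{G}(\epsilon_g)| + |\mathcal{H}(\epsilon_H)| = \mathcal{O}(\epsilon_g^{-2}) + \mathcal{O}(\epsilon_H^{-3})$, a pigeonhole argument guarantees the first simultaneously stationary index within $\mathcal{O}(\max\{\epsilon_g^{-2},\epsilon_H^{-3}\})$ iterations. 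As each iteration of Algorithm~\ref{alg.inexact.two-step} performs a bounded number of gradient and Hessian evaluations, the same $\mathcal{O}(\max\{\epsilon_g^{-2},\epsilon_H^{-3}\})$ bound governs the derivative-evaluation count.
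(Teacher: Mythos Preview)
Your treatment of $\mathcal{H}(\epsilon_H)$ and the joint pigeonhole argument are fine, and they match the paper's. The gap is exactly where you flagged it: your bridge from $\nabla f(\hat{x}_k)$ to $\nabla f(x_k)$ does not deliver the standalone bound $|\mathcal{G}(\epsilon_g)| = \mathcal{O}(\epsilon_g^{-2})$. Using $\|\nabla f(x_k)-\nabla f(\hat{x}_k)\|_2 \le \tfrac{L_g\beta\delta}{1-\gamma_\lambda}|\lambda_{\min,k}^-|$, an index with $\|\nabla f(x_k)\|_2>\epsilon_g$ only forces either $\|\nabla f(\hat{x}_k)\|_2>\epsilon_g/2$ or $|\lambda_{\min,k}^-|>c\,\epsilon_g$ for some constant $c$. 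The second alternative, counted via $\sum_k|\lambda_{\min,k}^-|^3\le S_H$, contributes $\mathcal{O}(\epsilon_g^{-3})$ indices, not $\mathcal{O}(\epsilon_g^{-2})$. Nor can you absorb this into $\mathcal{H}(\epsilon_H)$ for the joint bound, since the corollary puts no relation between $\epsilon_g$ and $\epsilon_H$: if $\epsilon_H$ is large relative to $\epsilon_g$, an index outside $\mathcal{H}(\epsilon_H)$ may still have $|\lambda_{\min,k}^-|$ of order $\epsilon_H$, which need not make $\|\nabla f(x_k)\|_2\le\epsilon_g$.

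The fix, and what the paper does, is to compare gradients across the \emph{descent} step rather than the negative curvature step: from $x_{k+1}=\hat{x}_k+\alpha d_k$, Assumption~\ref{asm.function} and Condition~\ref{cond.inexact.dk} give
\[
\|\nabla f(x_{k+1})\|_2 \;\le\; \Bigl(1+\tfrac{L_g\alpha}{1-\hat{\theta}}\Bigr)\,\|\nabla f(\hat{x}_k)\|_2,
\]
a purely multiplicative relation with no eigenvalue term. Hence $\mathcal{G}(\epsilon_g)\subseteq\{k:\|\nabla f(\hat{x}_{k-1})\|_2>(1+\tfrac{L_g\alpha}{1-\hat{\theta}})^{-1}\epsilon_g\}$, and your threshold-counting argument on $\sum_k\|\nabla f(\hat{x}_k)\|_2^2$ then yields $|\mathcal{G}(\epsilon_g)|=\mathcal{O}(\epsilon_g^{-2})$ directly. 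The moral: go forward from $\hat{x}_k$ to $x_{k+1}$, not backward to $x_k$.
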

\begin{proof}
    By Assumption \ref{asm.function}, the iterate update $x_{k+1} \leftarrow \hat{x}_{k} + \alpha d_k$ and Condition~\ref{cond.inexact.dk}, 
    \begin{equation*}
        \begin{split}
            \|\nabla f(x_{k+1})\|_2 & = \|\nabla f(x_{k+1}) - \nabla f(\hat{x}_k) + \nabla f(\hat{x}_k)\|_2 \\
            & \leq L_g \alpha \|d_k\|_2  + \| \nabla f(\hat{x}_k)\|_2\\
            & \leq \tfrac{L_g \alpha}{1-\hat{\theta}}\| \nabla f(\hat{x}_k)\|_2 +  \|\nabla f(\hat{x}_k)\|_2 = \left(1+\tfrac{L_g \alpha}{1-\hat{\theta}} \right) \| \nabla f(\hat{x}_k)\|_2.
        \end{split}
    \end{equation*}
    Thus, given $\epsilon_g \geq 0$, it follows that the set \begin{align*}
        \mathcal{G}(\epsilon_g) = \left\{k \in \N{}_{+}: \|\nabla f(x_{k})\|_2 > \epsilon_g\right\} \subseteq \left\{ k \in \N{}_{+}: \|\nabla f(\hat{x}_{k-1})\|_2 > \left(1+\tfrac{L_g \alpha}{1-\hat{\theta}} \right)^{-1} \epsilon_g \right\} := \hat{\mathcal{G}}(\epsilon_g).
    \end{align*}
    For all $k \in \mathbb{N}$, \eqref{eq.combine} holds, from which it follows that, 
    \begin{align*}
             k \in \mathcal{G}(\epsilon_g) \subseteq \hat{\mathcal{G}}(\epsilon_g) \quad  &\Longrightarrow \quad f(x_{k-1}) - f(x_{k}) \geq \tfrac{\alpha}{2 (1+\hat{\theta})} \left(1+\tfrac{L_g \alpha}{1-\hat{\theta}} \right)^{-2} \epsilon_g^2 = \tfrac{(1-\hat{\theta})^2}{8 L_g} \epsilon_g^2, \\
            \text{and } \quad 
            k \in \mathcal{H}(\epsilon_H) \quad  &\Longrightarrow \quad f(x_k) - f(x_{k+1}) \geq \tfrac{\beta^2 (\gamma-\gamma_H) \delta^2}{3 (1+ \gamma_{\lambda})^3} \epsilon_H^3 = \tfrac{(\gamma-\gamma_H)^3}{3 L_H^2 (1+ \gamma_{\lambda})^3} \epsilon_H^3.
        \end{align*}
    Since $f$ is bounded below by $\bar{f}$ and \eqref{eq.combine} ensures that $\{f(x_k)\}$ monotonically decreases, the inequalities above imply that $\mathcal{G}(\epsilon_g)$ and $\mathcal{H}(\epsilon_H)$ are both finite sets. In addition, by summing the reductions achieved in $f$ over the iterations, it follows that
    \begin{equation*}
        \begin{split}
            & f(x_0) - \bar{f} \geq \sum_{k \in \mathcal{G}(\epsilon_g)} (f(x_{k-1}) - f(x_{k})) \geq |\mathcal{G}(\epsilon_g)| \tfrac{(1-\hat{\theta})^2}{8L_g} \epsilon_g^2, \\
            \text{and } \quad & f(x_0) - \bar{f} \geq \sum_{k \in \mathcal{H}(\epsilon_H)} (f(x_k) - f(x_{k+1})) \geq |\mathcal{H}(\epsilon_H)| \tfrac{(\gamma-\gamma_H)^3}{3 L_H^2 (1+ \gamma_{\lambda})^3} \epsilon_H^3.
        \end{split}
    \end{equation*}
    Rearranging, completes the proof with 
    \begin{equation*}
        |\mathcal{G}(\epsilon_g)| \leq \tfrac{8L_g(f(x_0)-\bar{f})}{(1-\hat{\theta})^2} \epsilon_g^{-2} \quad \text{and} \quad |\mathcal{H}(\epsilon_H)| \leq \tfrac{3 L_H^2 (1+ \gamma_{\lambda})^3 (f(x_0) -  \bar{f})}{(\gamma-\gamma_H)^3} \epsilon_H^{-3}.
    \end{equation*}
\end{proof}
Similar to Theorem~\ref{thm.inexact.converge}, the result in Corollary~\ref{cor.complex1} matches that of the deterministic analog in terms of the constants $\epsilon_g >0$ and $\epsilon_H > 0 $ \cite{curtis2019exploiting, royer2018complexity}.

\section{Stochastic Setting} \label{sec.stoch_setting}

In this section, we focus on stochastic unconstrained optimization problems, i.e., 
\begin{equation*}
    \min_{x\in\R{n}} f(x) = \E_{\xi}[F(x, \xi)],
\end{equation*}
where $\xi$ is a random variable with associated probability space $(\Xi, \mathcal{F}, \Prob)$ and $F: \R{n} \times \Xi \to \R{}$. We assume access only to stochastic approximations of the objective function, the gradient and the Hessian. As in Section~\ref{sec.inexact}, we consider a two-step method, we present conditions on the two directions ($p_k$ and $d_k$) in expectation (Section~\ref{subsec.stoch_setting.conds}), and we derive convergence and complexity guarantees in expectation under different step size choices (Section~\ref{subsec.stoch_setting.theory}).

\subsection{Assumptions, Conditions, and Algorithm} \label{subsec.stoch_setting.conds}

Throughout this section, Assumption~\ref{asm.function} holds. Similar to Section~\ref{sec.inexact}, we impose conditions on the gradient and Hessian approximations computed and the search directions employed. In this section, these conditions are in expectation. To this end, we introduce the following conditional expectations. We define $\E[\cdot | \mathcal{F}_{k}]$ as $\E_{k}[\cdot]$ where $\mathcal{F}_{k} = \sigma(x_0, \dots, x_{k})$ is the $\sigma$-algebra generated by $x_0$, $\dots$, $x_{k}$, i.e., the history of the algorithm up to iteration $k$. Similarly, $\E[\cdot | \mathcal{F}_{k}, \hat{x}_k]$ is defined as $\E_{\hat{k}}[\cdot]$ and $\E[\cdot | \mathcal{F}_{k}, q_k]$ is defined as $\E_{k,q}[\cdot]$.

\bcondition \label{cond.sto.pk} For all $k\in \mathbb{N}$, if $\lambda_k \geq 0$ (the left-most eigenvalue of $H_k$), the negative
curvature direction is $p_k \leftarrow 0$. Otherwise, it is chosen by 
\begin{align*}
    p_k = \begin{cases}
        q_k, & g_k^{\mathsf{T}}q_k \leq 0 \\
        -q_k, & \text{otherwise}
    \end{cases}
\end{align*}
where $q_k$ is a negative curvature vector (with respect to the matrix $H_k$) such that
\begin{equation}\label{fom.sto.qk}
    \begin{split}
        q_k^{\mathsf{T}} H_k q_k &\leq \gamma \lambda_k \|q_k\|_2^2 < 0, \quad \gamma \in (0, 1] \\
        \|q_k\|_2 &= \delta |\lambda_k|, \quad\quad \quad \quad\delta \in (0, \infty)
    \end{split}
\end{equation}
and $g_k$ is the approximation of $\nabla f(x_k)$ satisfying
\begin{equation} \label{fom.sto.gk}
    \E_{k, q} \left[|(g_k - \nabla f(x_k))^{\mathsf{T}} q_k| \right] \leq \theta_k^2 \E_{k, q} \left[ |g_k^{\mathsf{T}} q_k| \right] + \sigma_k^2, \quad \theta_k \in [0, 1), \ \sigma_k \in [0, \infty) .
\end{equation}
\econdition
\bremark
Given a Hessian approximation $H_k$, the computation of the vector $q_k$ is the same as the deterministic analogue \eqref{fom.inexact.qk}. As compared to the deterministic gradient condition \eqref{fom.inexact.gk}, \eqref{fom.sto.gk} holds in expectation (conditioned on $\mathcal{F}_k$ and $q_k$) and has an additional term $\sigma_k^2$ that captures possibly non-diminishable errors in the approximation of $g_k^{\mathsf{T}}q_k$.
\eremark

Similar to the deterministic setting, a stochastic version of Condition \ref{cond.inexact.hk} is required for the stochastic Hessian approximation $H_k$. 

\bcondition\label{cond.sto.hk}
    For all $k \in \N{}$, the stochastic Hessian approximation $H_k$ and negative curvature direction $p_k$ satisfy in expectation
    \begin{equation}\label{fom.sto.hk}
        \begin{split}
            \E_{k}\left[\|(H_k - \nabla^2 f(x_k)) p_k\|_2^2 \right] \leq \gamma_H^2 |\lambda_{\min,k}^- |^2  \E_{k} \left[\|p_k\|_2^2 \right], \quad \gamma_H \in [0, \gamma),
        \end{split}
    \end{equation}
    where $\lambda_{\min, k}^- := \min\{\lambda_{\min} \left(\nabla^2 f(x_k) \right), 0 \}$. 
    Moreover, the gap between the left-most eigenvalues of $H_k$ and $\nabla^2 f(x_k)$ is bounded by \begin{equation}\label{fom.sto.lbdk}
        \begin{split}
            \E_{k}\left[ | \lambda_k^- - \lambda_{\min, k}^- | \right] \leq \gamma_{\lambda} |\lambda_{\min, k}^-|, \quad \gamma_{\lambda} \in [0, 1)
        \end{split}
    \end{equation}
    where 
    $\lambda_k^- := \min\{\lambda_k, 0\}$.
\econdition
\bremark
When $\lambda_{\min} \left(\nabla^2 f(x_k) \right) \geq 0$, by definition $\lambda_{\min, k}^- = 0$, and inequality \eqref{fom.sto.lbdk} implies  $\lambda_k^- = 0$ with probability $1$. From this, it follows that $\lambda_k \geq 0$ and $p_k = 0$ with probability $1$ which in turn justifies the right-hand-side of \eqref{fom.sto.hk}.
To justify the use of $q_k$ in the computation of the negative curvature direction $p_k$ and to verify the conditions thereof, we first note that, $p_k = \mathds{1}(g_k^{\mathsf{T}}q_k \leq 0)q_k + \mathds{1}(g_k^{\mathsf{T}}q_k > 0)(-q_k)$ and $\mathds{1}(g_k^Tq_k \leq 0)+\mathds{1}(g_k^Tq_k > 0) = 1$. It follows that 
\begin{align*}
    \E_k[\|p_k\|_2^2] & = \E_k \left[\|\mathds{1}(g_k^{\mathsf{T}}q_k \leq 0)q_k + \mathds{1}(g_k^{\mathsf{T}}q_k > 0)(-q_k)\|_2^2 \right] \\
    & = \E_k[\mathds{1}(g_k^{\mathsf{T}}q_k \leq 0)\|q_k\|_2^2 + \mathds{1}(g_k^{\mathsf{T}}q_k > 0) \|-q_k\|_2^2] \\
    & = \E_k[(\mathds{1}(g_k^{\mathsf{T}}q_k \leq 0)+ \mathds{1}(g_k^{\mathsf{T}}q_k > 0)) \|q_k\|_2^2] = \E_k[\|q_k\|_2^2].
\end{align*}
This idea also applies to other inequalities involving norms of $q_k$ and $p_k$.
\eremark

Given a direction of negative curvature, the iterate is updated via $\hat{x}_{k} \leftarrow x_k + \beta_k p_k$, where $\beta_k > 0$ is the step size, and the algorithm proceeds to the gradient-related step $d_k$. For the direction $d_k$ at the point $\hat{x}_k$, we assume $d_k \leftarrow -\hat{g}_k$ being a negative gradient approximation and $\hat{g}_k$ has the following condition.
\bcondition\label{cond.sto.dk}
For all $k \in \N{}$, the stochastic gradient approximation $\hat{g}_k$ satisfies 
\begin{subequations}\label{fom.sto.hatgk}
\begin{align}
        \E_{\hat{k}} \left[\hat{g}_k \right] & = \nabla f(\hat{x}_k), \label{fom.sto.hatgk1}\\
        \E_{\hat{k}} \left[ \|\hat{g}_k - \nabla f(\hat{x}_k)\|_2^2 \right] & \leq \hat{\theta}_k^2 \|\nabla f(\hat{x}_k)\|_2^2 + \hat{\sigma}_k^2, \quad \hat{\theta}_k \in [0, \infty), \ \hat{\sigma}_k \in [0, \infty). \label{fom.sto.hatgk2}
    \end{align}
    \end{subequations}
\econdition

\bremark
Condition \ref{cond.sto.dk} is commonly used in the context of stochastic gradient-based methods \cite{bottou2018optimization}. This condition can be generalized; see e.g., \cite[Assumption 4.3]{bottou2018optimization}. The range of the parameter $\hat{\theta}_k$ in the stochastic setting is less restricted than its variant in the deterministic setting \eqref{fom.inexact.hatgk}. This is because our goal in the deterministic setting is to prove a stronger result which requires a stronger assumption, whereas such a result is not appropriate in the stochastic setting.
\eremark

One may notice that in the conditions for the deterministic setting (Conditions~\ref{cond.inexact.pk}, \ref{cond.inexact.hk} and \ref{cond.inexact.dk}), we use inexact information on the right-hand-side, such as $\lambda_k$ and $\|\hat{g}_k\|_2$, while in the conditions for the stochastic setting (Conditions~\ref{cond.sto.pk}, \ref{cond.sto.hk} and \ref{cond.sto.dk}), we use instead the exact information. The reason for this disparity is that in the deterministic setting, both the exact and inexact information based conditions are mathematically equivalent up to some constants. However, in the stochastic setting, such equivalence is not possible due to the randomness in the quantities computed, i.e., $g_k$, $\lambda_k$, $H_k$, and the search directions. Since exact information is necessary to ensure convergence to the solution, we use the exact information on the right-hand side of the conditions in the stochastic setting.

The complete stochastic two-step method is presented in 
Algorithm \ref{alg.sto.two-step}.
\begin{algorithm}[h]
    \caption{Stochastic Two-Step Method}\label{alg.sto.two-step}
    \begin{algorithmic}[1]
        \Require{$x_{0} \in \Real^n$, $\alpha_k > 0$, $\beta_k > 0$}
        \ForAll{$k \in \{0, 1, \dots\}$} 
        \State compute Hessian approximation $H_k$ that satisfies Condition \ref{cond.sto.hk}    
            \If{$\lambda_k \geq 0$} \label{line.sto.term_negative}
                \State set $p_k \leftarrow 0$ 
                \Else \State compute $q_k$ and set $p_k$ using Condition \ref{cond.sto.pk}
            \EndIf
            \State set $\hat{x}_k \leftarrow x_k + \beta_k p_k$
            \State compute gradient approximation $\hat{g}_k$ that satisfies Condition \ref{cond.sto.dk}  
            \If{$\hat{g}_k = 0$} \label{line.sto.term_descent}
                \State set $d_k \leftarrow 0$ 
            \Else \State set $d_k \leftarrow - \hat{g}_k$
            \EndIf
            \State set $x_{k+1} \leftarrow \hat{x}_k + \alpha_k d_k = x_k + \beta_k p_k + \alpha_k d_k$
		\EndFor
    \end{algorithmic}
\end{algorithm}

\bremark 
We note that Algorithm~\ref{alg.sto.two-step} has the same structure as Algorithm~\ref{alg.inexact.two-step} (deterministic inexact setting). One key difference is the fact that the stochastic algorithm does not have an explicit termination condition (analogous to Line~\ref{line.terminate} in  Algorithm~\ref{alg.inexact.two-step}) due to the stochasticity. Similar to the stochastic gradient method \cite{bottou2018optimization}, the step size sequences for the two steps in Algorithm~\ref{alg.sto.two-step} can be set as constant (sufficiently small) or adaptive (e.g., diminishing). The specific ranges for these two cases depend on user and problem-specific parameters, and the exact forms are given in Theorems \ref{thm.sto.conv.dimstep} and \ref{thm.sto.conv.dimvar}, respectively. 
\eremark

\subsection{Convergence and Complexity Results} \label{subsec.stoch_setting.theory}

In this subsection, we present the theoretical results for Algorithm \ref{alg.sto.two-step} under two different scenarios: $(i)$ constant variance with diminishing step sizes and $(ii)$ diminishing variance with constant step size.
Before showing the main convergence results, we first introduce a fundamental lemma. 
\blemma \label{lmm: basineq}
    Suppose Assumption \ref{asm.function} and 
    Conditions \ref{cond.sto.pk}, \ref{cond.sto.hk}, and \ref{cond.sto.dk} hold 
    with parameters $\gamma \in (0,1]$, $\gamma_H \in [0,\gamma)$, $\gamma_{\lambda} \in [0,1)$, $\gamma_H < \gamma (1-\gamma_{\lambda})$, $\delta \in (0, \infty)$, and $\hat{\theta}_k \in [0,\infty)$. Let the step size parameters satisfy 
    \begin{align}\label{eq.stepsizeconstant_sto}
        0< \alpha_k \leq \tfrac{1}{L_g \left(1+\hat{\theta}_k^2\right)} \quad \text{and} \quad 0 < \beta_k \leq \tfrac{3\left(\gamma - \tfrac{\gamma_H}{1-\gamma_{\lambda}}\right)}{2 \delta L_H}.
    \end{align}
    Then, for $k \in \N{}$ 
    \begin{equation}\label{important_eq}
        \begin{aligned}
            \E_{k}[f(x_{k+1})]  \leq f(x_k) &- \beta_k^2 \tfrac{\delta^2 }{4}  \left(\gamma - \tfrac{\gamma_H}{1-\gamma_{\lambda}}\right) (1-\gamma_{\lambda})^3 |\lambda_{\min, k}^-|^3 \\
            & - \tfrac{\alpha_k}{2} \E_{k} [\|\nabla f(\hat{x}_k) \|_2^2] + \alpha_k^2 \hat{\sigma}_k^2\tfrac{L_g}{2} + \beta_k \sigma_k^2.
        \end{aligned}
    \end{equation}
\elemma
\bproof
At a given iterate $x_k$, if $\lambda_k \geq 0$, then $p_k = 0$ and $\hat{x}_k = x_k$, which means $f(\hat{x}_k) = f(x_k)$. Otherwise, $p_k \neq 0$, $\lambda_k^- = \lambda_k$ and it follows that
    \begin{align} \label{Inq:thm5-1}
            f(x_k + \beta_k p_k) & \leq f(x_k) + \beta_k \nabla f(x_k)^{\mathsf{T}} p_k + \tfrac{1}{2} \beta_k^2 p_k^{\mathsf{T}} \nabla^2 f(x_k) p_k + \tfrac{L_H}{6}\beta_k^3 \|p_k\|_2^3 \nonumber \\
            & = f(x_k) + \beta_k (\nabla f(x_k)-g_k)^{\mathsf{T}} p_k + \beta_k g_k^{\mathsf{T}} p_k + \tfrac{1}{2} \beta_k^2 p_k^{\mathsf{T}} H_k p_k \nonumber \\
            & \quad \quad + \tfrac{1}{2} \beta_k^2 p_k^{\mathsf{T}} (\nabla^2 f(x_k) - H_k) p_k + \tfrac{L_H}{6} \beta_k^3 \|p_k\|_2^3 \nonumber \\
            & \leq f(x_k) + \beta_k |(\nabla f(x_k)-g_k)^{\mathsf{T}}p_k | + \beta_k g_k^{\mathsf{T}} p_k + \tfrac{1}{2} \gamma \beta_k^2 \lambda_k \|p_k\|_2^2 \nonumber \\
            & \quad \quad + \tfrac{1}{2} \beta_k^2 \|p_k\|_2 \left\|(\nabla^2 f(x_k)- H_k) p_k\right\|_2 + \tfrac{L_H}{6} \beta_k^3 \|p_k\|_2^3 \nonumber \\
            & = f(x_k) + \beta_k |(\nabla f(x_k)-g_k)^{\mathsf{T}}p_k | + \beta_k g_k^{\mathsf{T}} p_k - \tfrac{1}{2} \gamma \beta_k^2 \delta^2 |\lambda_k|^3 \nonumber \\
            & \quad \quad + \tfrac{1}{2} \beta_k^2 \delta |\lambda_k| \left\|(\nabla^2 f(x_k)- H_k) p_k\right\|_2 + \tfrac{L_H}{6} \beta_k^3 \delta^3 |\lambda_k|^3 \nonumber \\
            & = f(x_k) + \beta_k |(\nabla f(x_k)-g_k)^{\mathsf{T}}p_k | + \beta_k g_k^{\mathsf{T}} p_k - \tfrac{1}{2} \gamma \beta_k^2 \delta^2 |\lambda_k^-|^3 \\
            & \quad \quad + \tfrac{1}{2} \beta_k^2 \delta |\lambda_k^-| \left\|(\nabla^2 f(x_k)- H_k) p_k\right\|_2 + \tfrac{L_H}{6} \beta_k^3 \delta^3 |\lambda_k^-|^3, \nonumber
    \end{align}
    where in the second inequality we use Condition \ref{cond.sto.pk}. Note that, the above inequality also holds when $\lambda_k \geq 0$ in which case $\lambda_k^- = 0$ and $p_k = 0$.
    By \eqref{fom.sto.gk}, it follows that
    \begin{align} \label{fom.sto.pk_descent}
            \E_{k} [ |(\nabla f(x_k)-g_k)^{\mathsf{T}}p_k |] + \E_{k} [g_k^{\mathsf{T}} p_k]
            & = \E_{k}\left[ \E_{k, q} [|(g_k - \nabla f(x_k))^{\mathsf{T}} p_k| ] \right] + \E_{k} [g_k^{\mathsf{T}} p_k] \nonumber \\
            &= \E_{k}\left[ \E_{k, q} [|(g_k - \nabla f(x_k))^{\mathsf{T}} q_k| ] \right] + \E_{k} [g_k^{\mathsf{T}} p_k] \nonumber \\
            & \leq \E_{k}\left[ \theta_k^2 \E_{k, q} [ |g_k^{\mathsf{T}} q_k| ] + \sigma_k^2 \right] + \E_{k} [g_k^{\mathsf{T}} p_k] \nonumber \\
            & = \theta_k^2 \E_{k} [ |g_k^{\mathsf{T}} p_k| ] + \E_{k} [g_k^{\mathsf{T}} p_k]  + \sigma_k^2 \nonumber \\
            & = (1-\theta_k^2) \E_{k} [g_k^{\mathsf{T}} p_k] + \sigma_k^2 \leq \sigma_k^2.
    \end{align}
    Then, taking the expectation on the both sides of \eqref{Inq:thm5-1} with respect to $x_k$, it follows that 
    \begin{align}\label{eq.inequality}
        \E_{k}[f(x_k + \beta_k p_k)] & \leq f(x_k) + \beta_k \E_{k}[ |(\nabla f(x_k)-g_k)^{\mathsf{T}}p_k |] + \beta_k \E_{k} [g_k^{\mathsf{T}} p_k] - \tfrac{1}{2} \gamma \beta_k^2 \delta^2 \E_{k}[|\lambda_k^-|^3] \nonumber \\
        & \quad \quad + \tfrac{1}{2} \beta_k^2 \delta \E_{k} \left[ |\lambda_k^-| \left\|(\nabla^2 f(x_k)- H_k) p_k \right\|_2 \right] + \tfrac{L_H}{6} \beta_k^3 \delta^3 \E_{k}[|\lambda_k^-|^3] \nonumber \\
        & \leq f(x_k) + \beta_k \sigma_k^2 - \tfrac{\gamma}{2} \beta_k^2 \delta^2 \E_{k}[|\lambda_k^-|^3] + \tfrac{L_H}{6} \beta_k^3 \delta^3 \E_{k}[|\lambda_k^-|^3] \nonumber \\
        & \quad \quad + \tfrac{1}{2} \beta_k^2 \delta \sqrt{\E_{k}[|\lambda_k^-|^2]} \sqrt{\E_{k} \left[\left\|(\nabla^2 f(x_k)- H_k) p_k \right\|_2^2 \right]} \nonumber \\
        & \leq f(x_k) - \tfrac{\gamma}{2} \beta_k^2 \delta^2 \E_{k}[|\lambda_k^-|^3] + \tfrac{L_H}{6} \beta_k^3 \delta^3 \E_{k}[|\lambda_k^-|^3] \nonumber \\
        & \quad \quad + \tfrac{1}{2} \beta_k^2 \delta \gamma_H |\lambda_{\min, k}^{-}|\sqrt{\E_{k}[|\lambda_k^-|^2]}  \sqrt{\E_{k} [\|p_k\|_2^2]} + \beta_k \sigma_k^2 \nonumber \\
        & \leq f(x_k) - \tfrac{\gamma}{2} \beta_k^2 \delta^2 \E_{k}[|\lambda_k^-|^3] + \tfrac{L_H}{6} \beta_k^3 \delta^3 \E_{k}[|\lambda_k^-|^3] + \tfrac{\gamma_H}{2(1-\gamma_{\lambda})} \beta_k^2 \delta^2 (\E_{k}[|\lambda_k^-|^2])^{3/2} \nonumber \\
        & \quad \quad  + \beta_k \sigma_k^2 \nonumber \\
        & \leq f(x_k) - \tfrac{\gamma}{2} \beta_k^2 \delta^2 \E_{k}[|\lambda_k^-|^3] + \tfrac{L_H}{6} \beta_k^3 \delta^3 \E_{k}[|\lambda_k^-|^3] + \tfrac{\gamma_H}{2(1-\gamma_{\lambda})} \beta_k^2 \delta^2 \E_{k}[|\lambda_k^-|^3] \nonumber \\
        & \quad \quad  + \beta_k \sigma_k^2 \nonumber \\
        & = f(x_k) - \tfrac{1}{2} \beta_k^2 \delta^2 \left(\gamma - \tfrac{\gamma_H}{1-\gamma_{\lambda}} - \tfrac{L_H}{3} \beta_k \delta \right) \E_{k}[|\lambda_k^-|^3] + \beta_k \sigma_k^2 \nonumber \\
        & \leq f(x_k) - \tfrac{1}{4} \beta_k^2 \delta^2 \left(\gamma - \tfrac{\gamma_H}{1-\gamma_{\lambda}}\right) \E_{k}[|\lambda_k^-|^3] + \beta_k \sigma_k^2 \nonumber \\
        & \leq f(x_k) - \tfrac{1}{4} \beta_k^2 \delta^2 \left(\gamma - \tfrac{\gamma_H}{1-\gamma_{\lambda}}\right) (1-\gamma_{\lambda})^3 |\lambda_{\min, k}^-|^3 + \beta_k \sigma_k^2.
    \end{align}
    The second inequality follows by \eqref{fom.sto.pk_descent} and the third inequality by \eqref{fom.sto.hk}. The fourth and last inequalities follow by a property induced by \eqref{fom.sto.lbdk}, that is, $
        \tfrac{1}{1+\gamma_{\lambda}} \E_{k}[|\lambda_k^-|] \leq |\lambda_{\min, k}^-| \leq \tfrac{1}{1-\gamma_{\lambda}} \E_{k}[|\lambda_k^-|]$. The fifth inequality follows by $
        \left(\E_{k}[|\lambda_k^-|^2] \right)^{1/2} \leq \left(\E_k[|\lambda_{k}^-|^3] \right)^{1/3}$ (since $|x|^2$ and $|x|^{3/2}$ are convex, this inequality can be proven by applying Jensen's inequality to $|x|^2$ and $|x|^{3/2}$, i.e., for any random variable $X$ with $\E[|X|] < \infty$, we have
    $
        (\E[|X|])^3 \leq \left(\E[|X|^2] \right)^{3/2} \leq \E[|X|^3]
    $.).  
    When $\lambda_k \geq 0$, by definition $\lambda_k^- = 0$ and 
    $p_k = 0$, so \eqref{eq.inequality} holds. Thus, for $k \in \N{}$, 
    \begin{align*}
        \E_{k}[f(\hat{x}_k)] = \E_{k}[f(x_k + \beta_k p_k)] \leq f(x_k) - \tfrac{1}{4} \beta_k^2 \delta^2 \left(\gamma - \tfrac{\gamma_H}{1-\gamma_{\lambda}}\right) (1-\gamma_{\lambda})^3 |\lambda_{\min, k}^-|^3 + \beta_k \sigma_k^2.
    \end{align*}
    
    We derive a similar inequality for the gradient-type step. By Assumption~\ref{asm.function} and the update step, i.e., 
    $x_{k+1} \leftarrow \hat{x}_k + \alpha_k d_k$, it follows that 
    \begin{align}\label{eq.inequality_gradient}
            f(x_{k+1}) & \leq f(\hat{x}_k) + \alpha_k \nabla f(\hat{x}_k)^{\mathsf{T}} d_k + \alpha_k^2\tfrac{L_g}{2}  \|d_k\|_2^2 \nonumber \\
            & = f(\hat{x}_k) - \alpha_k \nabla f(\hat{x}_k)^{\mathsf{T}} \hat{g}_k + \alpha_k^2\tfrac{L_g}{2}  \|\hat{g}_k\|_2^2.
    \end{align}
    Taking the expectation of \eqref{eq.inequality_gradient} conditioned on the fact that the algorithm has reached the iterate $\hat{x}_k$, by Condition~\ref{cond.sto.dk} and the step size condition \eqref{eq.stepsizeconstant_sto} 
    \begin{align*}
            \E_{\hat{k}}[f(x_{k+1})] & \leq f(\hat{x}_k) - \alpha_k \E_{\hat{k}} [\nabla f(\hat{x}_k)^{\mathsf{T}} \hat{g}_k] + \alpha_k^2\tfrac{L_g}{2}  \E_{\hat{k}} [\|\hat{g}_k\|_2^2] \\
            & \leq f(\hat{x}_k) - \alpha_k \|\nabla f(\hat{x}_k)\|_2^2 + \alpha_k^2\tfrac{L_g}{2}  \E_{\hat{k}} [\|\hat{g}_k\|_2^2] \\
            & \leq f(\hat{x}_k) - \alpha_k \|\nabla f(\hat{x}_k)\|_2^2 + \alpha_k^2\tfrac{L_g}{2}  \left(\hat{\sigma}_k^2 + (1+ \hat{\theta}_k^2) \|\nabla f(\hat{x}_k)\|_2^2\right) \\
            & = f(\hat{x}_k) - \alpha_k \left(1 - \tfrac{L_g}{2} (1+ \hat{\theta}_k^2) \alpha_k \right) \|\nabla f(\hat{x}_k)\|_2^2 + \tfrac{L_g}{2} \alpha_k^2 \hat{\sigma}_k^2 \\
            & \leq f(\hat{x}_k) - \tfrac{\alpha_k}{2} \|\nabla f(\hat{x}_k)\|_2^2 + \tfrac{L_g}{2} \alpha_k^2 \hat{\sigma}_k^2.
        \end{align*}
    Taking the expectation again conditioned on the fact that the algorithm has reached the iterate $x_k$, the inequality becomes
    \begin{align*}
            \E_{k}[f(x_{k+1})] & \leq \E_{k}[f(\hat{x}_k)] - \tfrac{\alpha_k}{2} \E_{k} [\|\nabla f(\hat{x}_k) \|_2^2] + \tfrac{L_g}{2} \alpha_k^2 \hat{\sigma}_k^2 \\
            & \leq f(x_k) - \beta_k^2\tfrac{\delta^2}{4}   (\gamma - \tfrac{\gamma_H}{1-\gamma_{\lambda}}) (1-\gamma_{\lambda})^3 |\lambda_{\min, k}^-|^3 \\
            & \quad \quad \quad \quad - \tfrac{\alpha_k}{2} \E_{k} [\|\nabla f(\hat{x}_k) \|_2^2] + \alpha_k^2 \hat{\sigma}_k^2\tfrac{L_g}{2}  + \beta_k \sigma_k^2
        \end{align*}
    which completes the proof.
\eproof
\bremark
    As compared to the update inequality in the deterministic inexact setting \eqref{eq.combine}, in the stochastic setting, and under Conditions \ref{cond.sto.pk}, \ref{cond.sto.hk}, and \ref{cond.sto.dk}, the update inequality in expectation \eqref{important_eq} has two additional positive terms, $\alpha_k^2 \hat{\sigma}_k^2\tfrac{L_g}{2}$ and $\beta_k \sigma_k^2$. The former comes from the variance of $\hat{g}_k$ and the latter comes from $\nabla f(x_k)^{\mathsf{T}} q_k$ which may not be negative in expectation.
\eremark

We consider two different step size selection strategies and error settings: $(i)$ constant variance and diminishing step size (Theorem~\ref{thm.sto.conv.dimstep}); and $(ii)$ diminishing variance and constant step size (Theorem~\ref{thm.sto.conv.dimvar}). In the former setting, we are only able to show a convergence result similar to those of the classical stochastic gradient method \cite[Theorem 4.9]{bottou2018optimization} and cannot prove convergence to second-order stationary points. In the latter setting, we prove a stronger result and show second-order convergence in expectation.

\btheorem
\label{thm.sto.conv.dimstep}
    Suppose Assumption \ref{asm.function} and Conditions \ref{cond.sto.pk}, \ref{cond.sto.hk} and \ref{cond.sto.dk} hold with parameters $\gamma \in (0,1]$, $\gamma_H \in [0,\gamma)$, $\gamma_{\lambda} \in [0,1)$, $\gamma_H < \gamma (1-\gamma_{\lambda})$, $\delta \in (0, \infty)$, $\theta_k \in [0, 1)$, $\hat{\theta}_k \in [0, \infty)$, $\sigma_k = \sigma$ and $\hat{\sigma}_k = \hat{\sigma} > 0$. Let the step size parameters satisfy \eqref{eq.stepsizeconstant_sto} and 
    \begin{align}\label{stepsize_eq}
        \sum_{k=0}^{\infty} \alpha_k = \infty, \quad \sum_{k=0}^{\infty} \alpha_k^2 < \infty, \quad \text{and} \quad \sum_{k=0}^{\infty} \beta_k < \infty.
    \end{align}
    Then, the iterates generated by Algorithm \ref{alg.sto.two-step} satisfy
    \begin{equation*}
        \begin{split}
            \liminf_{k \to \infty} \E \left[\| \nabla f(x_k)\|_2 \right] = 0.
        \end{split}
    \end{equation*} 
\etheorem
\begin{proof}
    By Lemma \ref{lmm: basineq}, 
    \begin{equation*}
        \begin{split}
            \E_k[f(x_{k+1})] 
            & \leq f(x_k) -  \beta_k^2 c_\delta |\lambda_{\min, k}^{-}|^3 - \tfrac{\alpha_k}{2} \E_{k} [\|\nabla f(\hat{x}_k) \|_2^2] + \tfrac{L_g}{2} \alpha_k^2 \hat{\sigma}^2 + \beta_k \sigma^2,
        \end{split}
    \end{equation*}
    where $c_\delta:=  \tfrac{\delta^2}{4}(\gamma - \tfrac{\gamma_H}{1-\gamma_{\lambda}}) (1-\gamma_{\lambda})^3$. Taking the total expectation over all random variables up to iteration $k$ starting with $x_0$, and summing the above from 
    $k=0$ to $K$,
    \begin{equation*}
        \begin{split}
        \E[f(x_{K+1})] & \leq f(x_0) - c_\delta\sum_{k=0}^{K} \beta_k^2 \E[|\lambda_{\min, k}^-|^3] - \tfrac{1}{2}\sum_{k=0}^{K} \alpha_k \E[\|\nabla f(\hat{x}_k) \|_2^2] + \tfrac{L_g \hat{\sigma}^2}{2} \sum_{k=0}^{K} \alpha_k^2 + \sigma^2 \sum_{k=0}^{K} \beta_k.
        \end{split}
    \end{equation*}
    Rearranging the above, by taking the limit as $K$ goes to infinity, the choice of the step size parameters $\alpha_k$ and $\beta_k$ and Assumption~\ref{asm.function}
    \begin{equation*}
        \begin{split}
            c_{\delta} \sum_{k=0}^{\infty} \beta_k^2 \E[|\lambda_k^-|^3] + \tfrac{1}{2} \sum_{k=0}^{\infty} \alpha_k\E[\|\nabla f(\hat{x}_k) \|_2^2] \leq f(x_0) - \bar{f} + \tfrac{L_g\hat{\sigma}^2}{2} \sum_{k=0}^{\infty} \alpha_k^2 + \sigma^2 \sum_{k=0}^{\infty} \beta_k < \infty,
        \end{split}
    \end{equation*}
    which implies 
    \begin{align} \label{fom.finitesum1}
        \sum_{k=0}^{\infty} \beta_k^2 \E[|\lambda_k^-|^3] < \infty \quad \text{and} \quad \sum_{k=0}^{\infty} \alpha_k \E[\|\nabla f(\hat{x}_k) \|_2^2] < \infty. 
    \end{align}
    Notice that by the descent step size choice \eqref{stepsize_eq}, the latter implies
    \begin{equation}\label{Lim:Result1}
        \liminf_{k \to \infty} \E[\|\nabla f(\hat{x}_k) \|_2] = 0.
    \end{equation}
    By Assumption~\ref{asm.function} and Condition \ref{fom.sto.hatgk}, 
    \begin{equation} \label{fom.normg_bound1}
        \E \left[\|\nabla f(x_{k+1}) - \nabla f(\hat{x}_k)\|_2 \right] \leq L_g \alpha_k (1+\hat{\theta}_k) \E[\|\nabla f(\hat{x}_k)\|_2] + L_g \alpha_k \hat{\sigma}.
    \end{equation}
    Combining the above with \eqref{Lim:Result1}
    \begin{align} \label{fom.normg_bound2}
            0 \leq \liminf_{k \to \infty} \E \left[\|\nabla f(x_{k+1})\|_2 \right] & \leq \liminf_{k \to \infty} \left( \E \left[\|\nabla f(\hat{x}_k)\|_2 \right] + \E \left[\|\nabla f(x_{k+1}) - \nabla f(\hat{x}_k)\|_2 \right] \right) \nonumber \\
            & \leq \liminf_{k \to \infty} \left((1+L_g \alpha_k(1+\hat{\theta}_k)) \E \left[\|\nabla f(\hat{x}_k)\|_2 \right] + L_g\alpha_k \hat{\sigma} \right) \nonumber \\
            & = \liminf_{k \to \infty}(1+L_g \alpha_k(1+\hat{\theta}_k)) \E \left[\|\nabla f(\hat{x}_k)\|_2 \right] + \lim_{k \to \infty} L_g\alpha_k \hat{\sigma} = 0
        \end{align}
    where the last equality is due to the fact that the sequence $\{L_g\alpha_k \hat{\sigma}\}$ is convergent. From this, the desired result follows.
\end{proof}
\bremark 
    By Lemma \ref{lmm: basineq} it follows that after each negative curvature step the prospective decrease (negative term in \eqref{important_eq} associated with negative curvature step) is $\mathcal{O}(\beta_k^2)$, whereas the error term (negative term in \eqref{important_eq} associated with negative curvature step) is $\mathcal{O}(\beta_k)$. To show a convergence result (to the solution), we need the summation of the error term to be finite which means $\beta_k$ has to be summable in this case. Consequently, $\beta_k^2$ is summable and we cannot derive convergence to a second-order stationary point. On the other hand, the descent steps provide decrease similar to the stochastic gradient methods (terms associated with $\alpha_k$ in \eqref{important_eq}), and as such in Theorem~\ref{thm.sto.conv.dimstep} only show convergence to first-order stationary points. 
\eremark

In this theorem, we present the convergence result for the setting where the variance is diminishing and we use constant step size.

\btheorem
\label{thm.sto.conv.dimvar}
    Suppose Assumption \ref{asm.function} and Conditions \ref{cond.sto.pk}, \ref{cond.sto.hk} and \ref{cond.sto.dk} hold with parameters $\gamma \in (0,1]$, $\gamma_H \in [0,\gamma)$, $\gamma_{\lambda} \in [0,1)$, $\gamma_H < \gamma (1-\gamma_{\lambda})$, $\delta \in (0, \infty)$, $\theta_k = \theta \in [0, 1)$, $\hat{\theta}_k = \hat{\theta} \in [0, \infty)$, and $\sigma_k> 0$, $\hat{\sigma}_k > 0$ where
    \begin{align*}
        \sum_{k=0}^{\infty} \sigma_k^2 < \infty, \quad \text{and} \quad \sum_{k=0}^{\infty} \hat{\sigma}_k^2 < \infty.
    \end{align*}
    Let the step size parameters $\alpha_k = \alpha$ and $\beta_k = \beta$ satisfy \eqref{eq.stepsizeconstant_sto}. Then, the iterates generated by Algorithm \ref{alg.sto.two-step} satisfy
    \begin{align*}
        \lim_{k \to \infty} \E \left[\| \nabla f(x_k)\|_2 \right] = 0 \quad \text{and} \quad \liminf_{k \to \infty} \E \left[ \lambda_{\min}(\nabla^2 f(x_k)) \right] \geq 0.
    \end{align*}
\etheorem
\bproof
    For reference, we restate the inequality from Lemma \ref{lmm: basineq} here
    \begin{align*}
        \E_k[f(x_{k+1})] \leq f(x_k) - \beta^2c_{\delta}  \E_{k}[|\lambda_{\min, k}^-|^3] - \tfrac{\alpha}{2} \E_{k} [\|\nabla f(\hat{x}_k) \|_2^2] + \tfrac{L_g}{2} \alpha^2 \hat{\sigma}_k^2 + \beta \sigma_k^2,
    \end{align*}
    where $c_{\delta}:= \tfrac{\delta^2}{4}  (\gamma - \tfrac{\gamma_H}{1-\gamma_{\lambda}}) (1-\gamma_{\lambda})^3$.
    Taking the total expectation, it follows that
    \begin{equation} \label{fom.sto.whole_exp}
        \E[f(x_{k+1})] \leq \E[f(x_k)] -  \beta^2 c_{\delta} \E[|\lambda_{\min, k}^-|^3] - \tfrac{\alpha}{2} \E [\|\nabla f(\hat{x}_k) \|_2^2] + \tfrac{L_g}{2} \alpha^2 \hat{\sigma}_k^2 + \beta \sigma_k^2.
    \end{equation}
    Summing from $k=0$ to $K$ and rearranging \eqref{fom.sto.whole_exp},
    \begin{equation*}
        \begin{split}
            \beta^2 c_{\delta}\sum_{k=0}^{K} \E[|\lambda_{\min, k}^-|^3] + \tfrac{\alpha}{2} \sum_{k=0}^{K} \E[\|\nabla f(\hat{x}_k) \|_2^2] & \leq f(x_0) - \E[f(x_{K+1})] + \tfrac{L_g\alpha^2}{2} \sum_{k=0}^{K} \hat{\sigma}_k^2 + \beta \sum_{k=0}^{K} \sigma_k^2.
        \end{split}
    \end{equation*}
    Taking the limit as $K \rightarrow \infty$, by Assumption~\ref{asm.function} and the conditions on $\sigma_k$ and $\hat{\sigma}_k$, 
    \begin{align*}
        \beta^2 c_{\delta} \sum_{k=0}^{\infty} \E[|\lambda_{\min, k}^-|^3] + \tfrac{\alpha}{2} \sum_{k=0}^{\infty} \E[\|\nabla f(\hat{x}_k) \|_2^2] 
        < \infty,
    \end{align*}
    from which it follows that
    \begin{align} \label{fom.finitesum2}
     \sum_{k=0}^{\infty} \E[|\lambda_{\min, k}^-|^3] < \infty \quad \text{and} \quad \sum_{k=0}^{\infty} \E[\|\nabla f(\hat{x}_k) \|_2^2] < \infty. 
    \end{align}
    The latter bound yields $\lim_{k \to \infty} \E[\|\nabla f(\hat{x}_k) \|_2] = 0$.  
    Using a similar argument as in the proof of Theorem~\ref{thm.sto.conv.dimstep}  (\eqref{fom.normg_bound1} and \eqref{fom.normg_bound2}), it follows that 
    \begin{align*}
        0 \leq \limsup_{k\to \infty} \E[\| \nabla f(x_{k+1})\|_2] \leq (1+L_g\alpha(1+\hat{\theta}_{k}))\lim_{k\to \infty} \E[\|\nabla f(\hat{x}_{k})\|_2] + L_g\alpha \lim_{k\to\infty} \hat{\sigma}_k = 0,
    \end{align*}
    which implies  $\lim_{k \to \infty} \E[\|\nabla f(x_k) \|_2] = 0$. The former bound implies 
    $\lim_{k \to \infty} \E[|\lambda_{\min, k}^-|^3] = 0$. Following the same arguments as in the deterministic setting (Theorem~\ref{thm.inexact.converge}), it follows that 
        $\liminf_{k \to \infty} \E[\lambda_{\min}(\nabla^2 f(x_k))] \geq 0$, 
    which concludes our proof.
\eproof

\bremark 
Theorem~\ref{thm.sto.conv.dimvar} shows that in the constant sufficiently small step size and diminishing variance setting, the iterates generated by Algorithm~\ref{alg.sto.two-step} converges to a second-order stationary point in expectation. This is a stronger result than that proven in Theorem~\ref{thm.sto.conv.dimstep} for the constant variance and diminishing step size setting. The reason for this is that the diminishing variance allows for more accurate gradient and Hessian estimations as the optimization progresses, and as such one can show \eqref{fom.finitesum2} with constant step sizes.
\eremark

We conclude this section with a corollary to Theorem~\ref{thm.sto.conv.dimvar} that provides an iterations complexity for Algorithm~\ref{alg.sto.two-step} in the diminishing variance and constant step size setting. 
\begin{corollary}
Consider any scalars $\epsilon_g$, $\epsilon_H>0$ and the conditions in Theorem~\ref{thm.sto.conv.dimvar}. With respect to Algorithm \ref{alg.sto.two-step}, 
the cardinality of the index set $\mathcal{G}(\epsilon_g) := \{k \in \mathbb{N}: \E[\|\nabla f({x}_{k}) \|_2] > \epsilon_g\}$ is at most $\mathcal{O}(\epsilon_g^{-2})$, and the cardinality of the index set $\mathcal{H}(\epsilon_H) := \{k \in \mathbb{N}: \E[|\lambda_{\min, k}^-|] > \epsilon_H \}$ is at most $\mathcal{O}(\epsilon_H^{-3})$. 
Hence, the number of iterations 
required until an iteration $k \in \mathbb{N}$ is reached with $\E[\|\nabla f(x_k) \|_2] \leq \epsilon_g$ and $\E[\lambda_{\min}(\nabla^2 f(x_k))] \geq -\epsilon_H$ is at most $\mathcal{O}(\max\{\epsilon_g^{-2}, \epsilon_H^{-3}\})$.
\end{corollary}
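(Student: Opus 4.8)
The plan is to mirror the deterministic argument of Corollary~\ref{cor.complex1}, working entirely in expectation and exploiting the summability of $\{\sigma_k^2\}$ and $\{\hat\sigma_k^2\}$. The starting point is the total-expectation inequality \eqref{fom.sto.whole_exp} supplied by Lemma~\ref{lmm: basineq}. Summing it from $k=0$ to $\infty$ and invoking Assumption~\ref{asm.function} together with $\sum_k \sigma_k^2<\infty$ and $\sum_k \hat\sigma_k^2<\infty$, I recover, as in \eqref{fom.finitesum2}, a finite constant $C$ for which
\[
\beta^2 c_\delta \sum_{k=0}^{\infty} \E[|\lambda_{\min,k}^-|^3] + \tfrac{\alpha}{2}\sum_{k=0}^{\infty}\E[\|\nabla f(\hat x_k)\|_2^2] \le C,
\qquad C := f(x_0)-\bar f + \tfrac{L_g\alpha^2}{2}\sum_k\hat\sigma_k^2 + \beta\sum_k\sigma_k^2 .
\]

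For the eigenvalue set $\mathcal H(\epsilon_H)$ I would apply Jensen's inequality to the convex map $t\mapsto |t|^3$, giving $\E[|\lambda_{\min,k}^-|^3]\ge (\E[|\lambda_{\min,k}^-|])^3$. Each $k\in\mathcal H(\epsilon_H)$ then contributes more than $\epsilon_H^3$ to the first sum, so $\beta^2 c_\delta |\mathcal H(\epsilon_H)|\epsilon_H^3\le C$ and hence $|\mathcal H(\epsilon_H)|\le C/(\beta^2 c_\delta \epsilon_H^3)=\mathcal O(\epsilon_H^{-3})$.

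The main obstacle is the gradient set $\mathcal G(\epsilon_g)$: it is defined through $\E[\|\nabla f(x_k)\|_2]$, whereas the budget inequality controls $\E[\|\nabla f(\hat x_k)\|_2^2]$, and the transition from $\hat x_k$ to $x_{k+1}$ carries the extra additive noise $L_g\alpha\hat\sigma_k$ absent in the deterministic setting. To deal with this I would take the diminishing-variance analog of \eqref{fom.normg_bound1}, namely $\E[\|\nabla f(x_{k+1})-\nabla f(\hat x_k)\|_2]\le L_g\alpha(1+\hat\theta)\E[\|\nabla f(\hat x_k)\|_2]+L_g\alpha\hat\sigma_k$, square it using $(a+b)^2\le 2a^2+2b^2$, and apply Jensen ($\E[\|\nabla f(\hat x_k)\|_2]^2\le\E[\|\nabla f(\hat x_k)\|_2^2]$). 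Summing over $k$, the first resulting term is finite by the budget inequality and the second by $\sum_k\hat\sigma_k^2<\infty$, so $\sum_{k}(\E[\|\nabla f(x_k)\|_2])^2\le C'<\infty$. A direct counting argument then closes the bound: each $k\in\mathcal G(\epsilon_g)$ contributes more than $\epsilon_g^2$, whence $|\mathcal G(\epsilon_g)|\le C'/\epsilon_g^2=\mathcal O(\epsilon_g^{-2})$.

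For the combined statement I would observe that $\lambda_{\min,k}^-\le\lambda_{\min}(\nabla^2 f(x_k))$ with $\E[\lambda_{\min,k}^-]=-\E[|\lambda_{\min,k}^-|]$, so $\E[|\lambda_{\min,k}^-|]\le\epsilon_H$ forces $\E[\lambda_{\min}(\nabla^2 f(x_k))]\ge-\epsilon_H$. Consequently every iteration that fails at least one of the two targets lies in $\mathcal G(\epsilon_g)\cup\mathcal H(\epsilon_H)$, a set of cardinality at most $|\mathcal G(\epsilon_g)|+|\mathcal H(\epsilon_H)|=\mathcal O(\epsilon_g^{-2})+\mathcal O(\epsilon_H^{-3})=\mathcal O(\max\{\epsilon_g^{-2},\epsilon_H^{-3}\})$; the first index outside this finite set attains both targets, which yields the claimed iteration complexity.
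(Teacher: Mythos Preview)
Your proposal is correct and uses the same ingredients as the paper: the summed total-expectation inequality \eqref{fom.sto.whole_exp}, Jensen's inequality for the eigenvalue set, the Lipschitz bound \eqref{fom.normg_bound1} to pass from $\hat x_k$ to $x_{k+1}$, and the decomposition $\lambda_{\min}(\nabla^2 f(x_k))=\lambda_{\min,k}^{+}+\lambda_{\min,k}^{-}$ for the final statement.

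The only organizational difference concerns $\mathcal G(\epsilon_g)$. The paper first derives the per-iteration relation $\epsilon_g^2<4\E[\|\nabla f(\hat x_{k-1})\|_2^2]+2\alpha L_g\hat\sigma_{k-1}^2$, defines an auxiliary superset $\hat{\mathcal G}(\epsilon_g)\supseteq\mathcal G(\epsilon_g)$, and then reads off the per-iteration function decrease directly from \eqref{fom.sto.whole_exp} before summing. You instead first establish the global bound $\sum_{k}(\E[\|\nabla f(x_k)\|_2])^2<\infty$ (by squaring \eqref{fom.normg_bound1}, applying Jensen, and invoking $\sum_k\hat\sigma_k^2<\infty$) and then count. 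Your route avoids the auxiliary set and the need to track per-iteration potential decreases, at the cost of a slightly looser implicit constant from the $(a+b)^2\le 2a^2+2b^2$ step; the paper's route keeps the constants tied more directly to the descent lemma. Both arrive at the same $\mathcal O(\epsilon_g^{-2})$ bound.
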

\bproof
    By Assumption \ref{asm.function}, the iteration update $x_{k+1} \leftarrow \hat{x}_k + \alpha d_k$ and Condition~\ref{fom.sto.hatgk},  
    \begin{align*}
        \epsilon_g^2 < 4\E[\|\nabla f(\hat{x}_k)\|_2^2] + 2\alpha L_g \hat{\sigma}_k^2.
    \end{align*}
    Thus, given $\epsilon_g \geq 0$, it follows that the set
    \begin{align*}
        \mathcal{G}(\epsilon_g) \subseteq \left\{k \in \N{}_{+}: \E[\|\nabla f(\hat{x}_{k-1})\|_2^2] + \tfrac{\alpha L_g}{2} \hat{\sigma}_{k-1}^2 > \tfrac{1}{4} \epsilon_g^2 \right\} := \hat{\mathcal{G}}(\epsilon_g).
    \end{align*}
    For all $k \in \N{}$, \eqref{fom.sto.whole_exp} holds, from which it follows that,
    \begin{align*}
            k \in \mathcal{G} (\epsilon_g) \subseteq \hat{\mathcal{G}}(\epsilon_g) \ & \Rightarrow \ \E[\|\nabla f(x_k)\|_2^2] > \epsilon_g^2 \ \Rightarrow \ \E[\|\nabla f(\hat{x}_{k-1})\|_2^2] + \tfrac{\alpha L_g}{2} \hat{\sigma}_{k-1}^2 > \tfrac{1}{4} \epsilon_g^2 \\
            & \Rightarrow \ \E[f(x_{k-1})] - \E[f(x_{k})] + \tfrac{3L_g \alpha^2}{4} \hat{\sigma}_{k-1}^2 + \beta \sigma_k^2 > \tfrac{\alpha}{8} \epsilon_g^2\\
            k \in \mathcal{H}(\epsilon_H) \ & \Rightarrow \ \E[|\lambda_{\min, k}^{-}|^3] > \epsilon_H^3 \\
            & \Rightarrow \ \E[f(x_{k-1})] - \E[f(x_{k})] + \tfrac{L_g \alpha^2}{2} \hat{\sigma}_{k-1}^2 + \beta \sigma_{k-1}^2 \geq  c_{\delta}\beta^2 \epsilon_H^3.
    \end{align*}
    By Assumption \ref{asm.function} and \eqref{fom.sto.whole_exp}, the sets $\mathcal{G}(\epsilon_g)$ and $\mathcal{H}(\epsilon_H)$ are both finite. By summing the reductions achieved in $f$ over the iterations, it follows that
\begin{equation*}
        \begin{split}
            & \infty > f(x_0)-\bar{f} + \tfrac{3L_g \alpha^2}{4} \sum_{k=0}^{\infty} \hat{\sigma}_k^2 + \beta \sum_{k=0}^{\infty} \sigma_k^2 \geq \tfrac{\alpha}{8} \epsilon_g^2 |\mathcal{G}(\epsilon_g)|, \\
            \text{and} \quad & \infty > f(x_0)-\bar{f} + \tfrac{L_g \alpha^2}{2} \sum_{k=0}^{\infty} \hat{\sigma}_k^2 + \beta \sum_{k=0}^{\infty} \sigma_k^2 \geq \tfrac{1}{4}C \beta^2\epsilon_H^3 |\mathcal{H}(\epsilon_H)|,
        \end{split}
    \end{equation*}
    which implies that $|\mathcal{G}(\epsilon_g)| \sim \mathcal{O}(\epsilon_g^{-2})$ and $|\mathcal{H}(\epsilon_H)| \sim \mathcal{O}(\epsilon_H^{-3})$ as expected.

    It follows that the number of iterations required until iteration $k \in \N{}$ is reached with $\E[\|\nabla f(x_k)\|_2] \leq \epsilon_g$ and $\E[|\lambda_{\min,k}^{-}|] \leq \epsilon_H$ is at most $\mathcal{O}(\max\{\epsilon_g^{-2}, \epsilon_H^{-3}\})$. Notice that $\lambda_{\min}(\nabla^2 f(x_k)) = \lambda_{\min,k}^{+} + \lambda_{\min,k}^{-}$ where $\lambda_{\min,k}^{+}:= \max\{\lambda_{\min}(\nabla^2 f(x_k)), 0\}$, so when $\E[|\lambda_{\min,k}^{-}|] \leq \epsilon_H$, it follows that
    \begin{align*}
        \E[\lambda_{\min}(\nabla^2 f(x_k))] = \E[\lambda_{\min,k}^{+}] + \E[\lambda_{\min,k}^{-}] \geq 0+\E[\lambda_{\min,k}^{-}]=-\E[|\lambda_{\min,k}^{-}|] \geq -\epsilon_H,
    \end{align*}
    which completes the proof.
\eproof
\bremark
With $\epsilon_g = \epsilon$ and $\epsilon_H = \sqrt{\epsilon}$ for some $\epsilon > 0$, the number of iterations to reach an $(\epsilon, \sqrt{\epsilon})$-second-order stationary point is at most $\mathcal{O}(\epsilon^{-2})$. 
Compared to the deterministic analogue of the corollary (Corollary~\ref{cor.complex1}), the complexity of the stochastic method matches that of the deterministic algorithm in terms of the tolerance $\epsilon$, however, the result is in expectation. 
\eremark

\section{Practical Algorithm} \label{sec.practical}

In this section, we describe a practical algorithm that exploits negative curvature. The algorithm does not take two steps at every iteration to update the iterate and instead exploits the power of CG to compute a step, does not explicitly compute the Hessian matrix and the associated minimum eigenvalue and instead relies solely on Hessian-vector products, utilizes only as much information as needed in the approximations (gradient and Hessian) employed and adjusts as optimization progresses, and dynamically selects the step size parameters. To this end, there are three main components: $(1)$ an adaptive sampling strategy for selecting the accuracy in the approximations employed at every iteration; $(2)$ the CG method with negative curvature detection and early stopping for computing the search direction; and, $(3)$ a practical step size selection scheme for setting the step size at every iteration. 

Given the current iterate $x_k \in \mathbb{R}^n$, sample sizes $b_k^g \in \N{}_{+}$, $b_k^H \in \N{}_{+}$, and sample sets $\mathcal{S}_k = \{\xi_{1}^g, \xi_{2}^g,\cdots \xi_{b_k^g}^g\}$ and $\mathcal{T}_k = \{\xi_{1}^H, \xi_{2}^H,\cdots \xi_{b_k^H}^H\}$ 
consisting of independent samples drawn at random from the distribution $\P$,  
the iterate is updated via
\begin{align}\label{eq.iterateupdate}
    x_{k+1} \gets x_k + \alpha_k d_k, \quad \text{where} \quad \nabla^2 f_{\mathcal{T}_k}(x_k) d_k = - \nabla f_{\mathcal{S}_k}(x_k),
\end{align}
$\alpha_k>0$ is the step size, and
\begin{align}\label{eq.sampled_gH}
    \nabla f_{\mathcal{S}_k}(x_k) = \tfrac{1}{|\mathcal{S}_k|} \sum_{\xi_i^g \in \mathcal{S}_k} \nabla F(x_k, \xi_i^g), \quad \text{and} \quad 
    \nabla^2 f_{\mathcal{T}_k}(x_k) = \tfrac{1}{|\mathcal{T}_k|} \sum_{\xi_i^H\in \mathcal{T}_k} \nabla^2 F(x_k, \xi_i^H).
\end{align}
The sample sets $\mathcal{S}_k$ and $\mathcal{T}_k$ are selected via an adaptive sampling strategy \cite{byrd2012sample}, the linear system in \eqref{eq.iterateupdate} is solved via the CG method \cite{hestenes1952methods} with negative curvature detection, and the step size $\alpha_k>0$ is set via an adaptive strategy \cite{bollapragada2018adaptive}. We discuss all the three components that are intimately connected in detail below. The step size strategy is well-defined and appropriate due to the adaptive sampling nature of the method, and the adaptive sampling strategy is well-suited with regard to CG and the approximations employed.

\subsection{Adaptive Sampling Strategy} \label{subsec.prac.adapsampling}

Adaptive sampling is a powerful technique that is used in stochastic optimization to control the accuracy of gradient (and possibly Hessian) estimates in a computationally efficient manner. Examples include the popular norm  \cite{carter1991global,byrd2012sample,berahas2022adaptive} and inner product  \cite{bollapragada2019exact,bollapragada2018progressive} tests. Inspired by the norm test and successful approximations thereof, and the conditions presented in Section~\ref{subsec.inexact.conditions} and \ref{subsec.stoch_setting.conds}, we customize said tests. Our algorithm requires gradient and Hessian approximations whose accuracy satisfy
\begin{align}
\label{fom.adapsampling.norm_test}
    \E_{k} \left[\| \nabla f_{\mathcal{S}_k}(x_k) - \nabla f(x_k) \|_2^2 \right] \leq \theta_k^2 \| \nabla f(x_k) \|_2^2 , \ \theta_k \in [0, 1), \\
    \E_{k}\left[\|(\nabla^2 f_{\mathcal{T}_k}(x_k) - \nabla^2 f(x_k)) d_k\|_2^2\right] \leq \theta_k^2 \E_{k}\left[\|d_k\|_2^2\right], \  \theta_k \in [0,1). \label{fom.adapsampling.norm_test_Hessian}
\end{align}
These conditions cannot be verified in practice without computing the true gradient and Hessian, and as such we approximate these conditions. Notice that we use information at the $k$th iteration to decide on the sample size at the next iteration. Given gradient $\mathcal{S}_k$ and Hessian $\mathcal{T}_k$ samples, our approach sets the samples sizes $b_{k+1}^g$ and $b_{k+1}^H$ as follows. For the gradient sample size, we approximate \eqref{fom.adapsampling.norm_test} via 
\begin{equation} \label{fom.adapsampling.approximation}
    \tfrac{\Var_{\xi_i^g \in \mathcal{S}_k}(\nabla F(x_k,\xi_i^g))}{|\mathcal{S}_k|}  \leq \theta_k^2 \| \nabla f_{\mathcal{S}_k}(x_k) \|_2^2,
\end{equation}
where the true variance is replaced by the sample variance, i.e., 
\begin{align}\label{fom.sample_var}
    \Var_{\xi_i^g \in \mathcal{S}_k}(\nabla F(x_k, \xi_i^g)) = \tfrac{1}{|\mathcal{S}_k|-1} \sum_{\xi_i^g \in \mathcal{S}_k} \|\nabla F(x_k,\xi_i^g) - \nabla f(x_k)\|_2^2.
\end{align} 
If \eqref{fom.adapsampling.approximation} is satisfied, then $b_{k+1}^g = |\mathcal{S}_{k}|$, otherwise, the new sample size is given by
\begin{equation} \label{fom.adapsampling.new_Sk}
    b_{k+1}^g = \left\lceil \tfrac{\Var_{\xi_i^g \in \mathcal{S}_k}(\nabla F(x_k,\xi_i^g))}{\theta_k^2 \| \nabla f_{\mathcal{S}_k}(x_k) \|_2^2} \right\rceil.
\end{equation}
The gradient approximation becomes increasingly accurate as the sample size $|\mathcal{S}_k|$ increases and has been shown to be efficient in practice \cite{bollapragada2018adaptive}. 
We apply the same idea for the Hessian sample size and condition \eqref{fom.adapsampling.norm_test_Hessian}. Here, the condition depends on the current search direction $d_k$. Namely, $b_{k+1}^H = |\mathcal{T}_{k}|$ if
\begin{equation}\label{fom.adapsampling.new_Tk2}
    \tfrac{\Var_{\xi_i^H \in \mathcal{T}_k}(\nabla^2 F(x_k,\xi_i^H) d_k)}{|\mathcal{T}_k|}  \leq \theta_k^2 \| d_k \|_2^2
\end{equation}
is satisfied, otherwise, 
\begin{equation} \label{fom.adapsampling.new_Tk}
    b_{k+1}^H = \left\lceil \tfrac{\Var_{\xi_i^H \in \mathcal{T}_k}(\nabla^2 F(x_k,\xi_i^H)d_k)}{\theta_k^2 \|d_k\|_2^2} \right\rceil.
\end{equation}

\subsection{Newton-CG with Negative Curvature Detection}

We utilize the CG method with negative curvature detection and early stopping to solve the linear system given in \eqref{eq.iterateupdate} and compute a search direction. We do so for three main reasons: (1) the approach exploits problem 
structure; (2) the approach can be implemented matrix-free; and (3) negative curvature detection and early stopping can be incorporated at no additional cost \cite{nocedal1999numerical}. At every iteration $k \in \N{}$, the CG method iteratively solves \eqref{eq.iterateupdate}. At every iteration of CG $j \in \N{}$, the approach either terminates with an approximate solution ($d_k$) to \eqref{eq.iterateupdate} defined as $\|-g_k-H_k d_k\|_2 \leq \epsilon_{CG} \|g_k\|_2$ for $\epsilon_{CG} > 0$ or a direction of sufficient negative curvature defined as $d_k^{\mathsf{T}} H_k d_k < -\epsilon_H \|d_k\|_2^2$ for $\epsilon_H>0$.

\subsection{Step Size Selection Scheme}

Our algorithm makes use of an adaptive step size selection scheme. Such approaches are fragile in fully stochastic regimes, and even in adaptive sampling settings, where the accuracy in the approximations employed can be controlled, care needs to be taken to ensure efficiency and robustness. To this end, we employ a sufficient decrease backtracking line search mechanism that is cautious in the choice of the initial trial step size \cite[Section 2.2]{bollapragada2018progressive}; see 
Algorithm~\ref{alg.prac.backtracking}. Specifically, we employ a variance-based initial trial step size, 
\begin{align}\label{eq.stepsize_var}
    \alpha_k \leftarrow \left(1+ \tfrac{\Var_{\xi_i^g \in \mathcal{S}_k}(\nabla F(x_k, \xi_i^g))}{|S_k| \|\nabla f_{S_k}(x_k)\|_2^2} \right)^{-1},
\end{align}
where the variance estimate is given in \eqref{fom.sample_var}. In the deterministic setting, \eqref{eq.stepsize_var} reduces to unity.
\begin{algorithm}
    \caption{Backtracking line search}\label{alg.prac.backtracking}
    \begin{algorithmic}[1]
        \Require{$x_k \in \Real^n$, $g_k \in \Real^n$, $d_k \in \Real^n$, 
        $S_k = \{\xi_{1}^g, \xi_{2}^g,\cdots \xi_{b_k^g}^g\}$, $c_1 \in (0,1)$, $\eta \in (0,1)$}
            \State set the initial step size $\alpha_k$ via \eqref{eq.stepsize_var}
            \State compute trial function value $f_{\text{trial}} \leftarrow f_{S_k}(x_k + \alpha_k d_k)$
            \While{$f_{\text{trial}} > f_{S_k}(x_k) + c_1 \alpha_k  g_k^{\mathsf{T}}  d_k$}
                \State reduce step size $\alpha_k \leftarrow \eta \alpha_k$
                \State compute trial function value $f_{\text{trial }} \leftarrow f_{S_k}(x_k + \alpha_k d_k)$
            \EndWhile
    \end{algorithmic}
\end{algorithm}

\subsection{Algorithm}
We now present our practical algorithm which contains the three components mentioned above (Algorithm \ref{alg.prac.ncas}). The main (while) loop consists of classical CG iterations. The search direction is computed iteratively and is either an approximate solution of \eqref{eq.iterateupdate} or a direction of sufficient negative curvature. The step size is computed via an adaptive procedure. Finally, using current information, the sample sizes are set for the next iteration. 

\begin{algorithm}
    \caption{A Newton-CG Method with Negative Curvature Detection and Adaptive Sample Size Selection (\texttt{NCAS})} \label{alg.prac.ncas}
    \begin{algorithmic}[1]
        \Require $x_0 \in \Real^n$, 
        $b_0^g \in \N{}_+$, $b_0^H \in \N{}_+$, $\epsilon_{CG} > 0$, $\epsilon_H > 0$, $\theta \in [0, 1)$, 
        $N_{CG} \in \N{}_{+}$
        \ForAll{$k \in \{0, 1, \dots\}$}
        \State choose the sample sets $\mathcal{S}_k = \{\xi_1^g,\xi_2^g,\cdots, \xi_{b_k^g}^g\}$ and $\mathcal{T}_k = \{\xi_{1}^H, \xi_{2}^H,\cdots \xi_{b_k^H}^H\}$
        \State compute $g_k = \nabla f_{\mathcal{S}_k}(x_k)$ and $H_k = \nabla^2 f_{\mathcal{T}_k}(x_k)$
        \State set $\bar{H}_k = H_k+2\epsilon_H I$ \label{alg.prac.line4}
        \State set $z_0 \leftarrow 0$, $r_0 \leftarrow g_k$ , $p_0 \leftarrow -r_0$, $d_k \leftarrow p_0$, $j \leftarrow 0$  \label{alg.prac.line5}
        \If{$p_0^{\mathsf{T}} H_k p_0 < -\epsilon_H \|p_0\|_2^2$} \label{alg.prac.line6}
            \State set $d_k \leftarrow p_0$ and \textbf{go to Line \ref{alg.prac.line23}} \label{alg.prac.line7}
        \EndIf
        \While{$j \leq N_{CG}$}
            \State $s_j \leftarrow r_j^{\mathsf{T}} r_j / p_j^{\mathsf{T}}\bar{H}_k p_j$, $z_{j+1} \leftarrow z_j + s_j p_j$, $r_{j+1} \leftarrow r_j + s_j \bar{H}_k p_j$ \label{alg.prac.line9}
            \State $t_{j+1} \leftarrow r_{j+1}^{\mathsf{T}} r_{j+1}/r_j^{\mathsf{T}} r_j$, $p_{j+1} \leftarrow -r_{j+1} + t_{j+1} p_j$, $d_k \leftarrow z_{j+1}$, $j \leftarrow j+1$ \label{alg.prac.line10}
            \If{$\|r_{j}\| \leq \epsilon_{CG} \|r_{0}\|$}
                \State \textbf{go to Line \ref{alg.prac.line23}}  \label{alg.prac.line12}
            \ElsIf{$p_{j}^{\mathsf{T}} H_k p_{j} < -\epsilon_H \|p_{j}\|_2^2$} \label{alg.prac.line13}
                \If{$p_{j}^{\mathsf{T}}g_k \leq 0$}
                    \State set $d_k \leftarrow p_{j}$ and \textbf{go to Line \ref{alg.prac.line23}}
                \Else
                    \State set $d_k \leftarrow -p_{j}$ and \textbf{go to Line \ref{alg.prac.line23}}
                \EndIf
            \ElsIf{$z_{j}^{\mathsf{T}} H_k z_{j} < -\epsilon_H \|z_{j}\|_2^2 $}
                \If{$z_{j}^{\mathsf{T}}g_k \leq 0$}
                    \State set $d_k \leftarrow z_{j}$ and \textbf{go to Line \ref{alg.prac.line23}}
                \Else
                    \State set $d_k \leftarrow -z_{j}$ and \textbf{go to Line \ref{alg.prac.line23}} \label{alg.prac.line22}
                \EndIf
            \EndIf
        \EndWhile
        \State compute $\alpha_k$ via \textbf{Algorithm \ref{alg.prac.backtracking}} \label{alg.prac.line23}
        \State set $x_{k+1} \leftarrow x_k + \alpha_k d_k$ 
        \State set the sample sizes $b_{k+1}^g$ and $b_{k+1}^H$ via \eqref{fom.adapsampling.approximation}-\eqref{fom.adapsampling.new_Sk} and \eqref{fom.adapsampling.new_Tk2}-\eqref{fom.adapsampling.new_Tk}, respectively \label{alg.prac.line25}
        \EndFor
    \end{algorithmic}
\end{algorithm}

\bremark We make a few remarks about Algorithm \ref{alg.prac.ncas}. 
\begin{itemize}
    \item \textbf{Hessian correction:} We employ a corrected Hessian (Line 4) in the CG method (Lines 9-12) where we added $2\epsilon_H I$ to the Hessian matrix $H_k$. This correction ensures that when $\lambda_{\min}(H_k) \geq -\epsilon_H$, the modified Hessian is sufficiently positively-definite, i.e.,  $\lambda_{\min} (\bar{H}_k) \geq \epsilon_H$, thereby avoiding stability issues in the computation of quantities such as $s_j$  involving $p_j^T\bar{H}_kp_j$ used in the CG method (Line 9). We note that the corrected Hessian is only used for the CG computations, and negative curvature directions are evaluated using the unmodified Hessian approximation. Specifically, when $\lambda_{\min}(H_k)$ is positive or sufficiently negative, the algorithm will get Newton's direction or a (sufficiently) negative curvature direction even without the correction. However, when $\lambda_{\min}(H_k) \in [-2\epsilon_H, 0]$, it might be hard to detect such direction and to implement CG process since $p_j^{\mathsf{T}}H_kp_j$ in line \ref{alg.prac.line9} could be non-positive. The use of $\bar{H}_k$ in the CG process can help with this case. Specifically, if $p_j$ is not a sufficiently negative curvature direction, then by line \ref{alg.prac.line6} and \ref{alg.prac.line13}, we have $p_j^{\mathsf{T}}\bar{H}_kp_j \geq \epsilon_H \|p_j\|_2^2 > 0$ which is positive enough to proceed the CG process. Such a correction approach has been employed successfully in the literature~\cite{royer2018complexity}.

    \item \textbf{CG method:} We use the CG method with negative curvature detection to either compute an approximate solution to $\bar{H}_k d = -g_k$ or compute a direction of negative curvature with respect to the matrix $H_k$ (Lines \ref{alg.prac.line5}-\ref{alg.prac.line22}).  
   Line \ref{alg.prac.line5} is the initialization process and Lines \ref{alg.prac.line9}-\ref{alg.prac.line12} are the steps of the standard CG procedure. The CG method has two parameters, $N_{CG}$ the maximum number of CG iterations and $\epsilon_{CG}$ the accuracy of CG. If the maximum number of iterations is reached, the algorithm returns the current estimate $z_{j+1}$ (Line~\ref{alg.prac.line10}).

    \item \textbf{Negative curvature detection:} Directions of negative curvature, if present, are detected on Lines \ref{alg.prac.line6}-\ref{alg.prac.line7} and Lines \ref{alg.prac.line13}-\ref{alg.prac.line22} of Algorithm~\ref{alg.prac.ncas}, and are an add-on to the CG routine. As a result, these negative curvature checks 
    come at negligible additional cost. Since any vector generated by the CG process could be a direction of negative curvature, 
    both directions $p_j$ and $z_j$ are tested.

    \item \textbf{Step Size:} The step size is selected (Line~\ref{alg.prac.line23}, Algorithm~\ref{alg.prac.ncas}) via Algorithm \ref{alg.prac.backtracking}. This approach is well-defined and is guaranteed to terminate finitely with an appropriate step size that is bounded away from zero \cite{bollapragada2018progressive}.
    
    \item \textbf{Sample size selection:} The sample sizes of gradient and Hessian are updated on Line \ref{alg.prac.line25}. For efficiency, and to avoid multiple gradient computations, the sample sizes for the next iteration are computed using the information at the current iterate. The updating rules follow those described in Section \ref{subsec.prac.adapsampling} and are designed to achieve a balance between optimality and complexity \cite{byrd2011use, byrd2012sample, martens2010deep}. 
    To avoid large batch size increases due only to the stochasticity, the rate of increase is capped by a 
    parameter $\zeta > 1$, i.e., $b_{k}^g \leq b_{k+1}^g \leq \lceil\zeta b_{k}^g\rceil$ and $b_{k}^H \leq b_{k+1}^H \leq \lceil\zeta b_{k}^H \rceil$. 
\end{itemize}
\eremark

\section{Numerical Experiments} \label{sec.num_exp}

In this section, we demonstrate the empirical performance of Algorithm \ref{alg.prac.ncas} on two nonconvex machine learning tasks, Robust Regression and Tukey Biweight, on datasets from the LIBSVM collection \cite{chang2011libsvm}. 
Our numerical study has two components. We first investigate the sensitivity and robustness of our method to the main algorithmic parameters (Section~\ref{subsec.exp.sensitivity}). We then compare the empirical performance of our method 
to that of a stochastic gradient method with adaptive sampling, a Trust Region Newton-CG method with adaptive sampling, and Algorithm \ref{alg.prac.ncas} without adaptive sampling (Section~\ref{subsec.exp.performance}). The goals of this comparison are to illustrate the power of negative curvature directions combined with the CG method and 
the power of the 
adaptive sampling strategy. All experiments were conducted in Matlab R2021b.

\subsection{Problem Specification, Algorithms, and Evaluation Metrics} \label{subsec.exp.settings}

We consider two machine learning tasks (Robust Regression \cite{chan2022nonlinear} and Tukey Biweight \cite{loh2017statistical, yu2017robust}). 
Both problems are nonconvex and data-dependent. Let $m$ denote the number of data points, $n$ denote the number of features, and $a_i \in \R{n}$ and $b_i \in \{-1,1\}$ denote the feature vector and the associated label, respectively, for $i\in\{1, \dots, m\}$. 
We consider three datasets from the LIBSVM collection \cite{chang2011libsvm} (\texttt{australian}: $n=14$, $m=621$; \texttt{mushroom}: $n=112$, $m=5500$; \texttt{splice}: $n=60$, $m=3175$). 
The robust regression problem is formally defined as follows,
    \begin{align*}
        \min_{x\in\R{n}} f_{RR}(x) = \tfrac{1}{m} \sum_{i=1}^{m} \phi(a_i^{\mathsf{T}}x - b_i), \quad \text{where} \quad \phi(t) = \tfrac{t^2}{1+t^2}.
    \end{align*}
    The Tukey Biweight problem is formally defined as follows, 
    \begin{align*}
        \min_{x\in\R{n}} f_{TB}(x) = \tfrac{1}{m} \sum_{i=1}^{m} \rho_{\sqrt{6}}(a_i^{\mathsf{T}}x - b_i), \quad \text{where} \quad 
        \rho_{\sqrt{6}}(t) = \begin{cases}
            \tfrac{t^6}{216} - \tfrac{t^4}{12} + \tfrac{t^2}{2} & \text{if } |t| \leq \sqrt{6}, \\
            1
            & \text{otherwise}.
        \end{cases} 
    \end{align*}

In order to investigate the merits and limitations of \texttt{NCAS} (Algorithm \ref{alg.prac.ncas}) and the three key components, we compare against the \texttt{SGAS}, \texttt{TRAS}, and \texttt{NC} methods. 
\begin{itemize}
    \item \texttt{SGAS}: We compare against the stochastic gradient method with adaptive sampling \cite{byrd2012sample}. 
    The only difference between \texttt{SGAS} and Algorithm \ref{alg.prac.ncas} is that \texttt{SGAS} does not utilize negative curvature (or the CG procedure to compute a search direction) and the search direction is set as the negative gradient. \texttt{SGAS} is a special case of Algorithm \ref{alg.prac.ncas} where $N_{CG} = 0$. By comparing these two methods, we want to showcase the power of negative curvature directions. 
    \item \texttt{NC}: We compare against a variant of Algorithm \ref{alg.prac.ncas} where the sample size is fixed to the full batch for both the gradient and Hessian approximations \cite{royer2020newton}. By comparing these two methods, we want to showcase the efficiency of the adaptive sampling strategy. 
     \item \texttt{TRAS}: We compare against a trust-region variant of Algorithm~\ref{alg.prac.ncas}. Specifically, \texttt{TRAS} utilizes the Steihaug version of CG \cite{steihaug1983conjugate} to compute a step within a trust region. The method utilizes the same sample size selection scheme as Algorithm~\ref{alg.prac.ncas} (discussed in Section~\ref{subsec.prac.adapsampling}). By comparing these two methods, we want to investigate the merits and limitations of the line search approach in the inexact setting.
\end{itemize}

The \texttt{SGAS}, \texttt{NC} and \texttt{NCAS} algorithms  make use of Algorithm~\ref{alg.prac.backtracking} to adaptively set the step size. The methods are summarized in Table~\ref{table.experiment}.
\begin{table}[h]
    \renewcommand\arraystretch{1.2}
    \centering\resizebox{\columnwidth}{!}{
    \begin{tabular}{c|c|c|c}
        Algorithm & 
            Negative Curvature
            Detection 
        & Adaptive Sampling & 
            Line Search (\texttt{LS})\slash 
            Trust Region (\texttt{TR})
       \\
        \hline\hline
        
            \yellow{\texttt{SGAS}} \cite{byrd2012sample}
        &
        & \Checkmark & \texttt{LS}
        \\
        \hline
            \green{\texttt{NC}} \cite{royer2020newton}
        & \Checkmark &  & \texttt{LS}
        \\
        \hline
            \cyan{\texttt{TRAS}} \cite{curtis2021trust, ha2023common}
        & \Checkmark & \Checkmark & \texttt{TR} \\
        \hline
        \red{\texttt{NCAS}} [Algorithm~\ref{alg.prac.ncas}] &  \Checkmark & \Checkmark & \texttt{LS}
        \\
    \end{tabular}
    }
    \caption{Algorithms compared in the following experiments.} 
    \label{table.experiment}
\end{table}

In terms of the algorithmic parameters, for algorithms that use Algorithm \ref{alg.prac.backtracking} to compute the step size, $c_1 = 10^{-4}$, $\eta = 0.5$. For \texttt{TRAS}, the trust-region related parameters are set to standard values $c_1 = 0.25$, $c_2 = 0.75$, and 
$\delta_0 = 1$ \cite{nocedal1999numerical}. For 
\texttt{NC} and \texttt{NCAS}, the Hessian accuracy parameter is set to  $\epsilon_H = 10^{-3}$. Algorithms \texttt{NCAS}, \texttt{SGAS}, and \texttt{TRAS} use the adaptive sampling strategy to decide the sample size for each iteration. The initial sample sizes are set to  $S_{\text{grad}} = 2$ and $S_{\text{Hess}} = 2$, the accuracy parameter $\theta = 0.9$, and the maximum increase rate $\zeta = 2$. The accuracy parameter of the CG process is $\epsilon_{CG} = 10^{-6}$ and the maximum number of CG iterations is $N_{CG} = 10$. 

In all experiments, we compare the methods in terms of the norm of gradient, the minimum eigenvalue, the sample size for gradient and Hessian, and step size/trust region radius. We present the evolution of these measures with respect to the iterations and  \textit{total evaluations}, which takes into consideration function, gradient, and Hessian evaluations, i.e.,  
$
    \textit{Total Evaluations} = 1 \cdot \textit{Evaluations}({f}) + 2 \cdot\textit{Evaluations}({\nabla f}) + 4 \cdot\textit{Evaluations}({\nabla^2 f}*s)
$ \cite{nocedal1999numerical}. 
All algorithms were terminated on a budget of total evaluations.

\subsection{Sensitivity Analysis} \label{subsec.exp.sensitivity}

In this subsection, we investigate the robustness of \texttt{NCAS} to five user-defined parameters. Three of these parameters are related to the CG subroutine: $(1)$ the eigenvalue accuracy parameter $\epsilon_H \in \{10^{-1}, 10^{-2}, 10^{-3}, 10^{-4} \}$ (Figure~\ref{fig:first}); $(2)$ the CG accuracy parameter $\epsilon_{CG} \in \{10^{-2}, 10^{-4}, 10^{-6}, 10^{-8}\}$ (Figure~\ref{fig:second}); and, $(3)$ the maximum CG iterations $N_{CG} \in \{0, 1, 5, 10, 100\}$ (Figure~\ref{fig:fourth}). The other two parameters are related to the adaptive sampling strategy: $(1)$ the adaptive sampling accuracy parameter $\theta \in \{0.1, 0.5, 0.9, 0.999\}$ (Figure~\ref{fig:third}); and, $(2)$ the maximum increase rate $\zeta \in \{1.1, 1.5, 2, 5, 10\}$ (Figure~\ref{fig:fifth}). We present results on the robust regression problem and the \texttt{australian} dataset. The default parameters in this experiment are $\epsilon_H = 10^{-3}$, $\epsilon_{CG} = 10^{-6}$, $\theta = 0.9$, $N_{CG} = 10$, and 
$\zeta = 2$. Algorithm \ref{alg.prac.ncas} with these default parameters serves as a baseline, and is shown in red in Figures~\ref{fig.sensitivity1} and \ref{fig.sensitivity2}.

The results suggest that the default parameters are often competitive. With respect to the parameters associated with the CG method, Algorithm~\ref{alg.prac.ncas} is robust across of wide range of parameter values. That said, there are certain parameter settings for which Algorithm~\ref{alg.prac.ncas} show significant slow-down, e.g., $\epsilon_{H} = 10^{-1}$ or $N_{CG} = 0, 1$. The reason for this is that in the former setting, the Hessian approximations are perturbed too much and useful second-order information is lost, and in the latter setting, not enough (if any) CG iterations are performed and the search directions are essentially gradient directions. With respect to the parameters associated with the adaptive sampling strategy, the performance is robust except when the maximum sample size increase is small, i.e., $\zeta = 1.1$.  The convergence rate in this case is slower as the sample size increase is restricted too much and the approximations employed are not accurate enough. As expected, the sample size increases faster when $\theta$ is small and $\zeta$ is large. Similar behavior was observed on other datasets. 

\begin{figure}[]
\centering
\begin{subfigure}{1\textwidth}
    \includegraphics[width=0.235\textwidth]{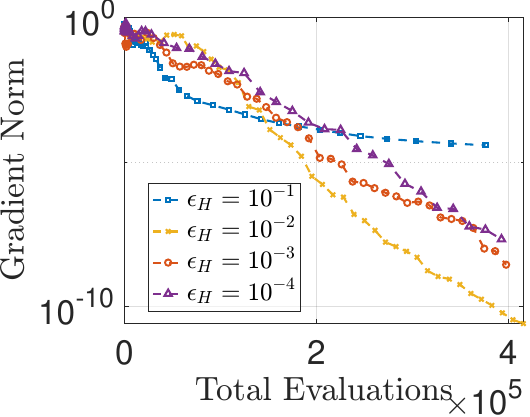}
    \includegraphics[width=0.24\textwidth]{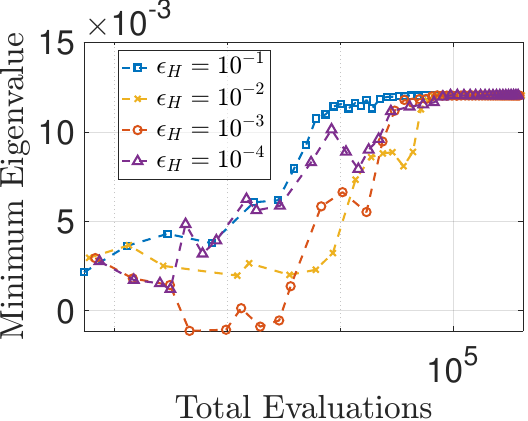}  \includegraphics[width=0.23\textwidth]{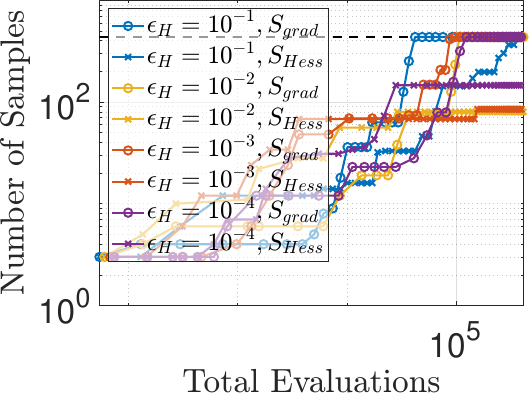} \includegraphics[width=0.23\textwidth]{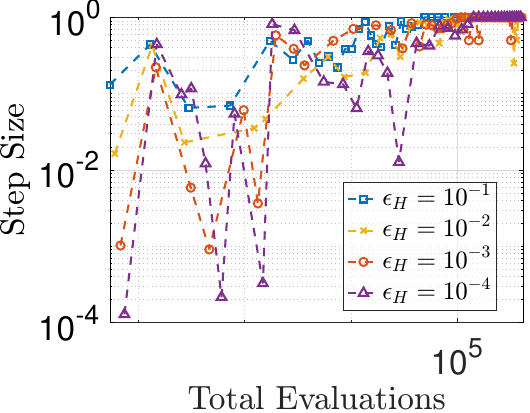}
    \caption{Sensitivity with respect to the eigenvalue accuracy parameter, $\epsilon_H \in \{10^{-1}, 10^{-2}, 10^{-3}, 10^{-4} \}$.}
    \label{fig:first}
\end{subfigure}
\hfill
\begin{subfigure}{1\textwidth}
    \includegraphics[width=0.235\textwidth]{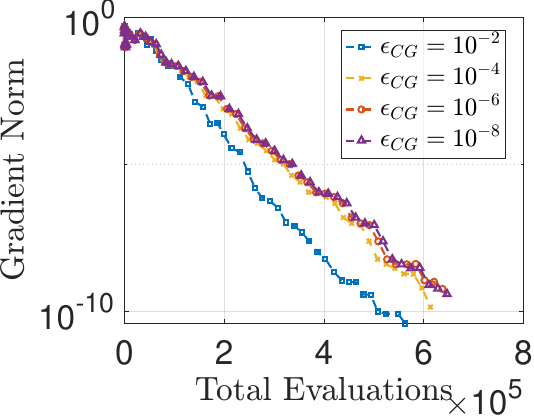} \includegraphics[width=0.24\textwidth]{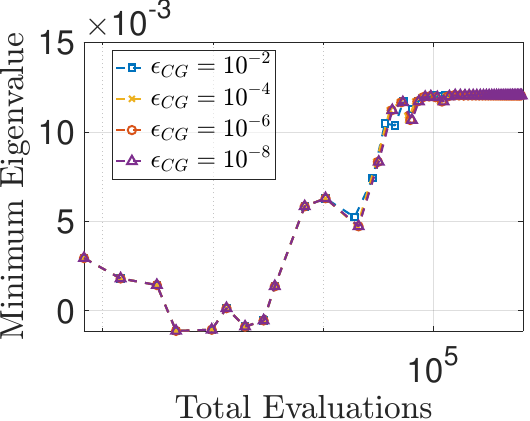}  \includegraphics[width=0.23\textwidth]{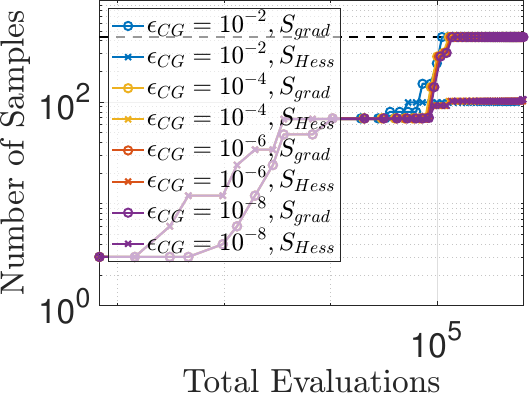}  \includegraphics[width=0.23\textwidth]{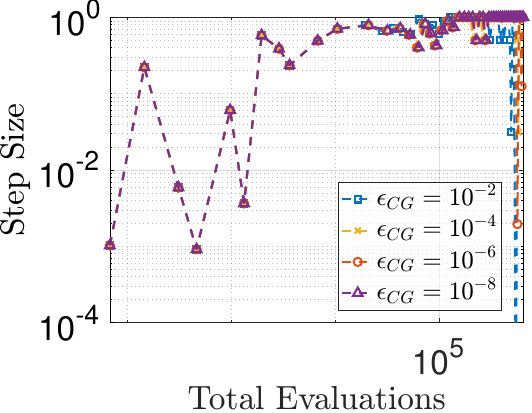}
    \caption{Sensitivity with respect to the CG accuracy parameter $\epsilon_{CG} \in \{10^{-2}, 10^{-4}, 10^{-6}, 10^{-8}\}$.}
    \label{fig:second}
\end{subfigure}
\hfill
\begin{subfigure}{1\textwidth}
    \includegraphics[width=0.24\textwidth]{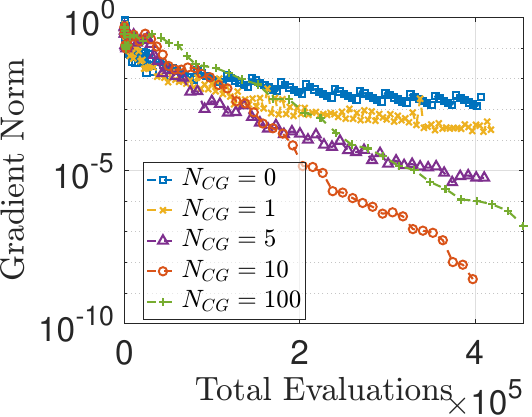}
     \includegraphics[width=0.24\textwidth]{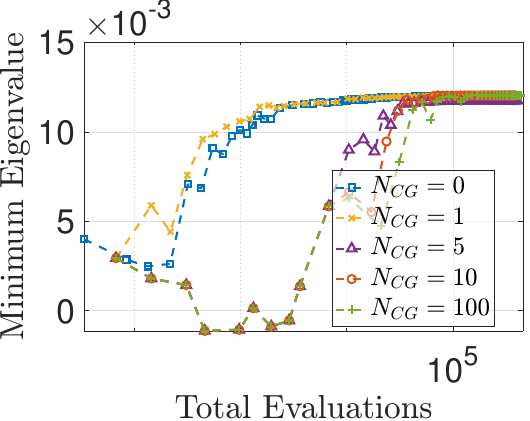} \includegraphics[width=0.235\textwidth]{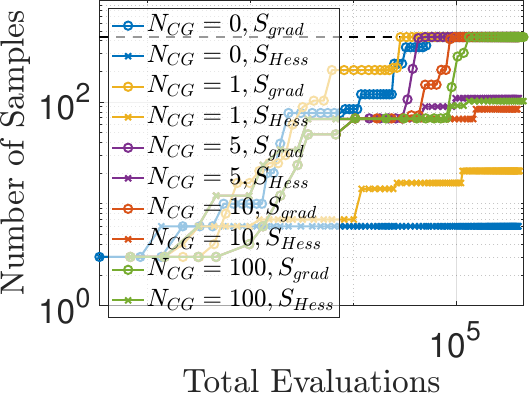}  \includegraphics[width=0.23\textwidth]{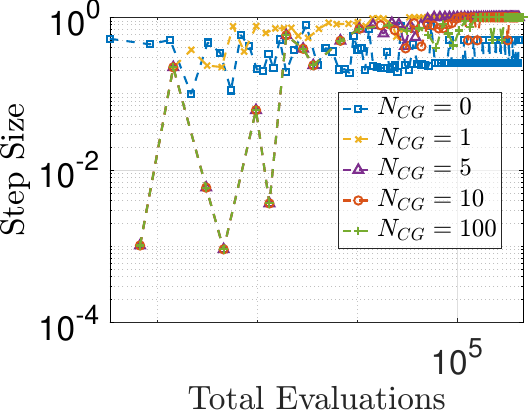}
    \caption{Sensitivity with respect to the maximum CG iterations $N_{CG} \in \{0, 1, 5, 10, 100\}$.}
    \label{fig:fourth}
\end{subfigure}
\caption{Sensitivity analysis of \texttt{NCAS} (Algorithm~\ref{alg.prac.ncas}) on robust regression problem (\texttt{australian} dataset) with respect to the parameters associated with the CG subroutine ($\epsilon_H$, $\epsilon_{CG}$, and $N_{CG}$) in terms of total evaluations.}
\label{fig.sensitivity1}
\end{figure}

\begin{figure}[]
\centering
\begin{subfigure}{1\textwidth}
    \includegraphics[width=0.235\textwidth]{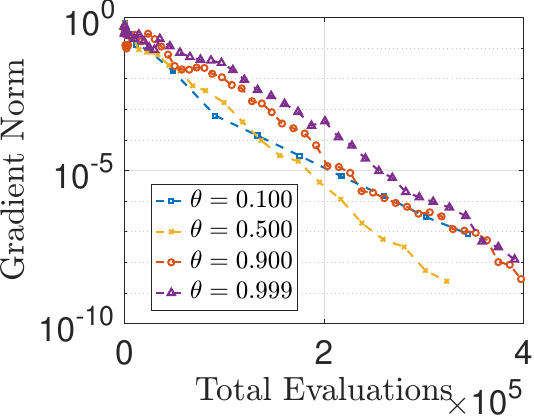} \includegraphics[width=0.24\textwidth]{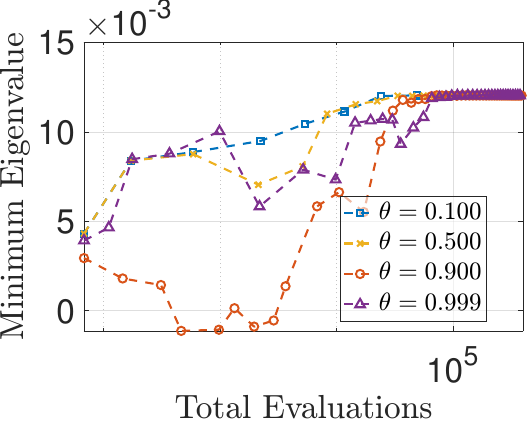}  \includegraphics[width=0.23\textwidth]{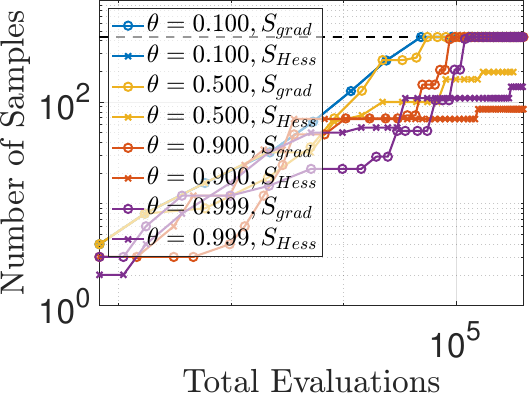}  \includegraphics[width=0.23\textwidth]{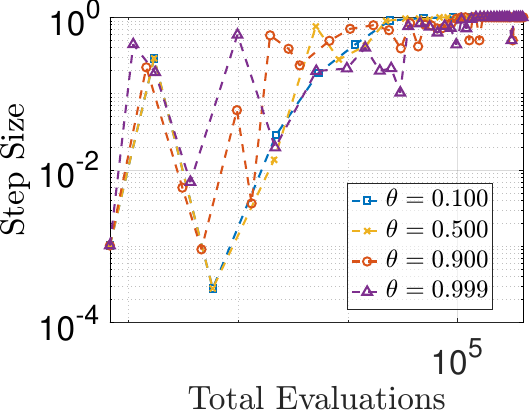}
    \caption{Sensitivity with respect to the adaptive sampling accuracy parameter $\theta \in \{0.1, 0.5, 0.9, 0.999\}$.}
    \label{fig:third}
\end{subfigure}
\hfill
\begin{subfigure}{1\textwidth}
    \includegraphics[width=0.23\textwidth]{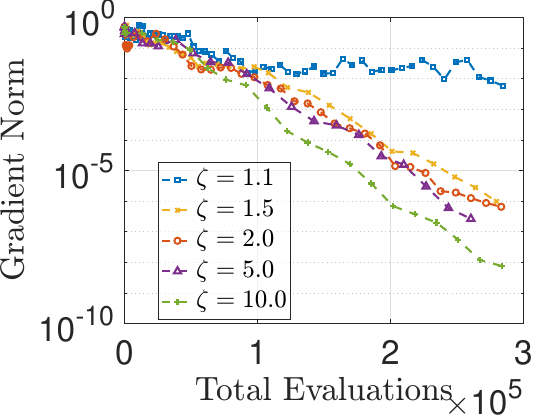} \includegraphics[width=0.234\textwidth]{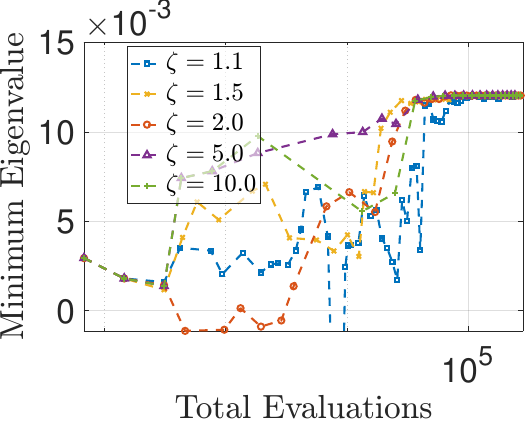}  \includegraphics[width=0.226\textwidth]{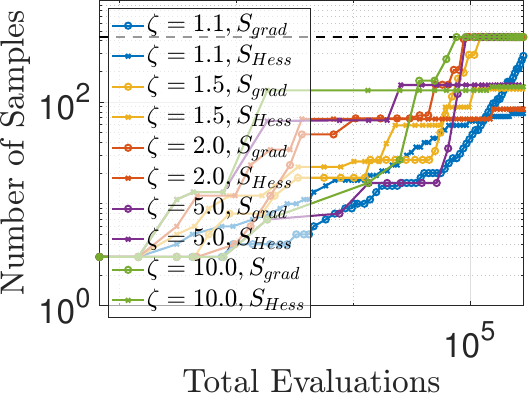}  \includegraphics[width=0.23\textwidth]{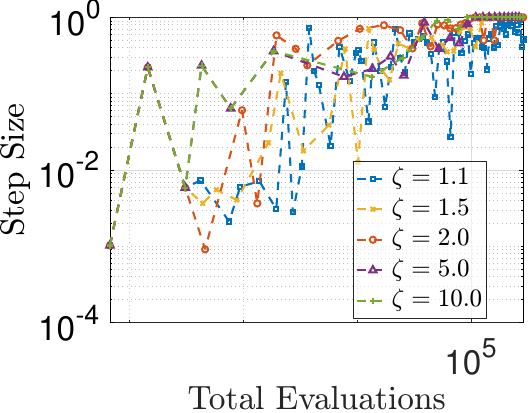}
    \caption{Sensitivity with respect to the adaptive sampling increase rate $\zeta \in \{1.1, 1.5, 2, 5, 10\}$.}
    \label{fig:fifth}
\end{subfigure}
\caption{Sensitivity analysis of \texttt{NCAS} (Algorithm~\ref{alg.prac.ncas}) on robust regression problem (\texttt{australian} dataset) with respect to the parameters associated with the adaptive sampling scheme ($\theta$, and $\zeta$) in terms of total evaluations.}
\label{fig.sensitivity2}
\end{figure}

\newpage

\subsection{Comparative Analysis} \label{subsec.exp.performance}

In this subsection, we compare the performance of \texttt{NCAS} with the methods described in Section \ref{subsec.exp.settings} on the robust regression (Figures~\ref{fig.rr.aus} and \ref{fig.rr.mush}) and the Tukey Biweight (Figures~\ref{fig.tb.aus} and \ref{fig.tb.mush}) problems on three datasets (\texttt{australian}, \texttt{mushroom}, \texttt{splice}). For all methods, problems, and datasets, we present the evolution of the gradient norm, the minimum eigenvalue, the sample size, and the step size/trust region with respect to iteration and total evaluations. We note that the \texttt{NC} method does not appear on any of  the sample size plots since it uses all samples at every iteration.

Figure~\ref{fig.rr.aus} shows the results on the robust regression problem with the \texttt{australian} dataset. In terms of iterations, the plots illustrate the quality of the search directions computed. As expected, the quality of the steps of the \texttt{NC} method is superior than the other methods due to the fact that the method uses exact function, gradient, and Hessian information to compute a step. In terms of total evaluations, our proposed method \texttt{NCAS} appears to be competitive as it makes frugal uses of the samples used at every iteration. Another interesting observation is that neither second-order adaptive sampling methods (\texttt{NCAS} and \texttt{TRAS}) reached the full Hessian sample size within the given budget. Similar behavior was observed on the \texttt{mushroom} and \texttt{splice} data sets (Figure~\ref{fig.rr.mush}). In fact, the advantages of adaptive sampling approaches and specifically \texttt{NCAS} are more pronounced in these larger (in terms of total samples) data sets. Finally, across the three data sets, the benefits of utilizing directions of negative curvature are clear.

\begin{figure}[]
    \renewcommand\arraystretch{1.2}
    \centering\resizebox{\columnwidth}{!}{
    \begin{subfigure}{0.3\textwidth}
        \centering
        \includegraphics[width=\textwidth]{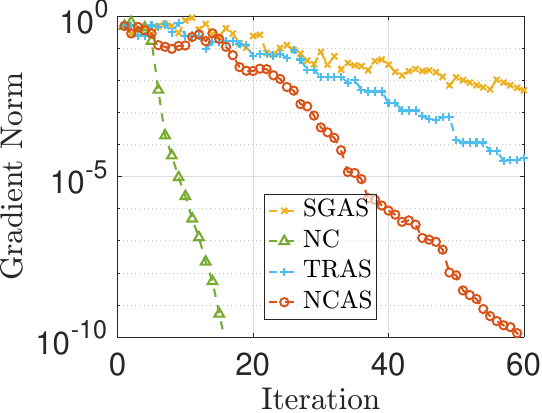}
    \end{subfigure}
    ~
    \begin{subfigure}{0.3\textwidth}
        \centering
        \includegraphics[width=\textwidth]{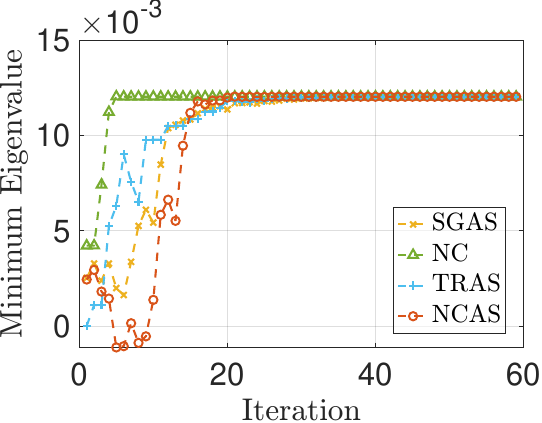}
    \end{subfigure}
    ~
    \begin{subfigure}{0.31\textwidth}
        \centering
        \includegraphics[width=0.93\textwidth]{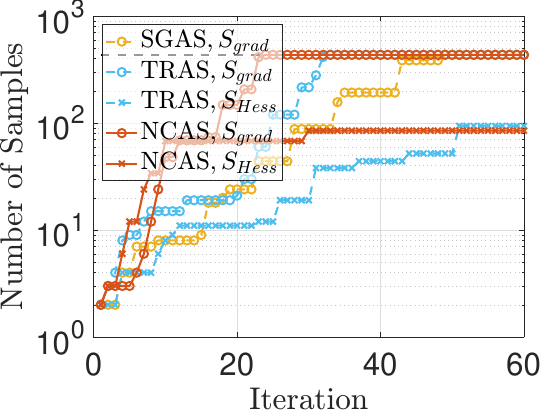}
    \end{subfigure}
    ~
    \begin{subfigure}{0.31\textwidth}
        \centering
        \includegraphics[width=\textwidth]{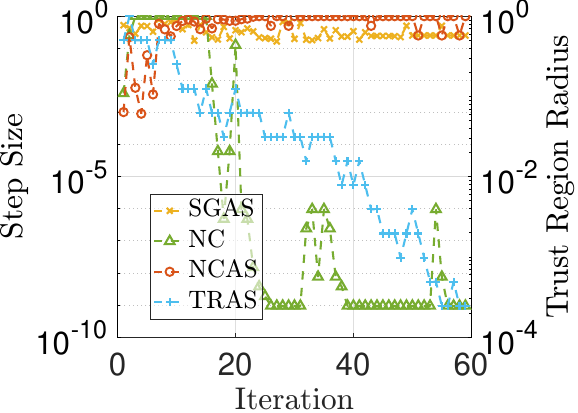}
    \end{subfigure}}\\

    \renewcommand\arraystretch{1.2}
    \centering\resizebox{\columnwidth}{!}{
    \begin{subfigure}{0.3\textwidth}
        \centering
        \includegraphics[width=\textwidth]{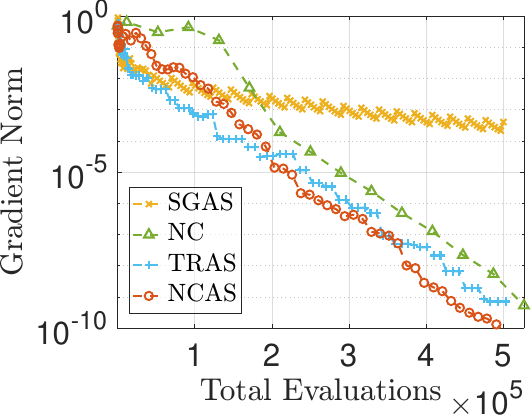}
    \end{subfigure}
    ~
    \begin{subfigure}{0.3\textwidth}
        \centering
        \includegraphics[width=\textwidth]{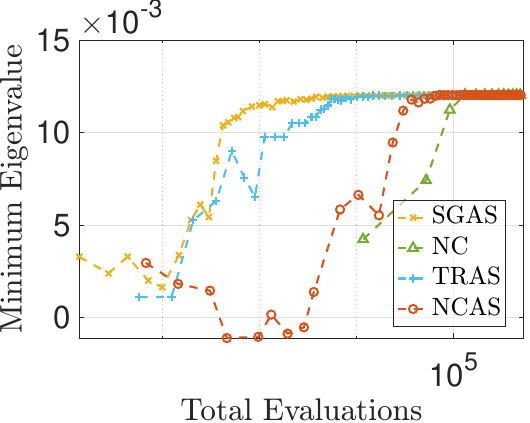}
    \end{subfigure}
    ~
    \begin{subfigure}{0.305\textwidth}
        \centering
        \includegraphics[width=\textwidth]{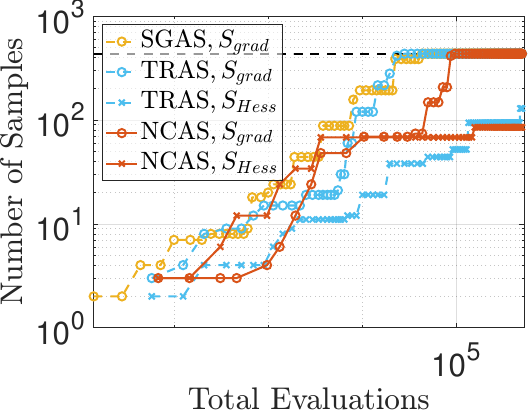}
    \end{subfigure}
    ~
    \begin{subfigure}{0.32\textwidth}
        \centering
        \includegraphics[width=\textwidth]{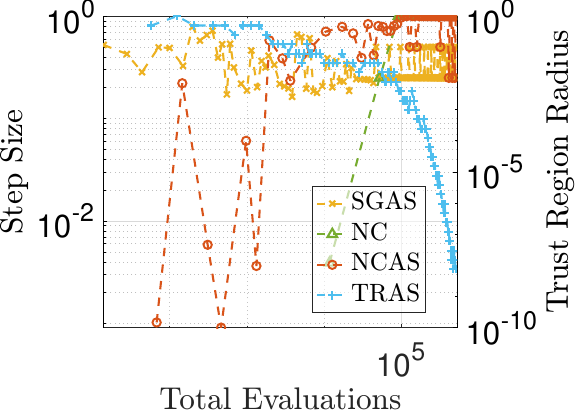}
    \end{subfigure}}
    \caption{Performance of \texttt{SGAS}, \texttt{NC}, \texttt{TRAS}, and \texttt{NCAS} on robust regression problem (\texttt{australian} dataset) in terms of gradient norm, minimum eigenvalue, sample size, and step size/trust region radius. First row iterations; Second row total evaluations.    
    }  \label{fig.rr.aus}
\end{figure}

\begin{figure}[]
    \renewcommand\arraystretch{1.2}
    \centering\resizebox{\columnwidth}{!}{
    \begin{subfigure}{0.3\textwidth}
        \centering
        \includegraphics[width=\textwidth]{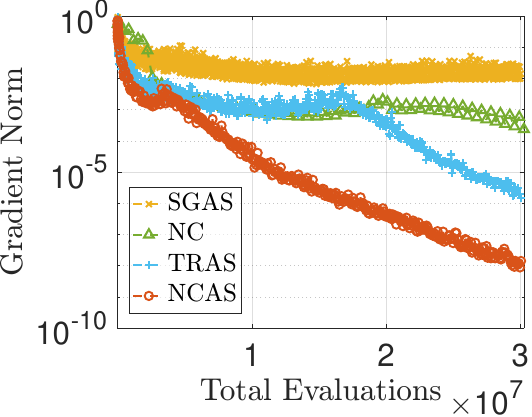}
    \end{subfigure}
    ~
    \begin{subfigure}{0.3\textwidth}
        \centering
        \includegraphics[width=0.99\textwidth]{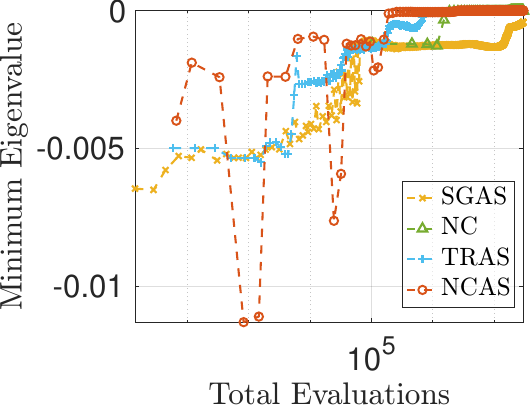}
    \end{subfigure}
    ~
    \begin{subfigure}{0.3\textwidth}
        \centering
        \includegraphics[width=\textwidth]{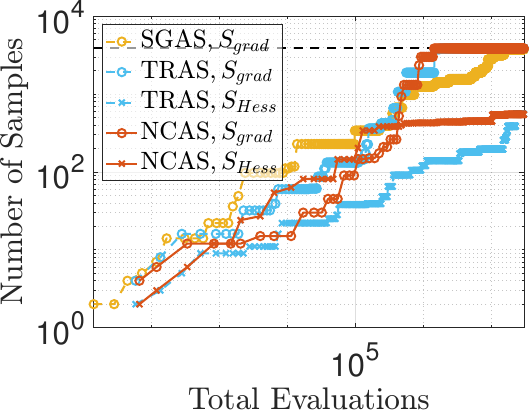}
    \end{subfigure}
    ~
    \begin{subfigure}{0.32\textwidth}
        \centering
        \includegraphics[width=\textwidth]{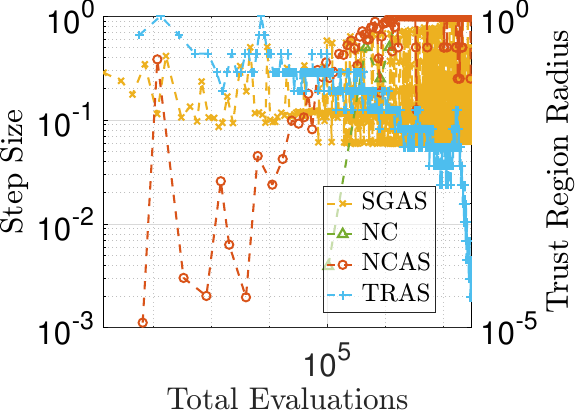}
    \end{subfigure}}\\
    \renewcommand\arraystretch{1.2}
    \centering\resizebox{\columnwidth}{!}{
    \begin{subfigure}{0.3\textwidth}
        \centering
        \includegraphics[width=\textwidth]{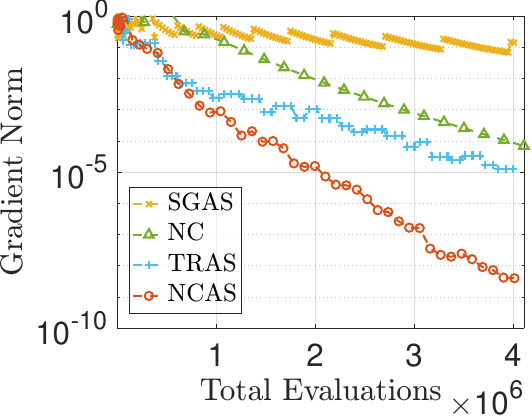}
    \end{subfigure}
    ~
    \begin{subfigure}{0.3\textwidth}
        \centering
        \includegraphics[width=\textwidth]{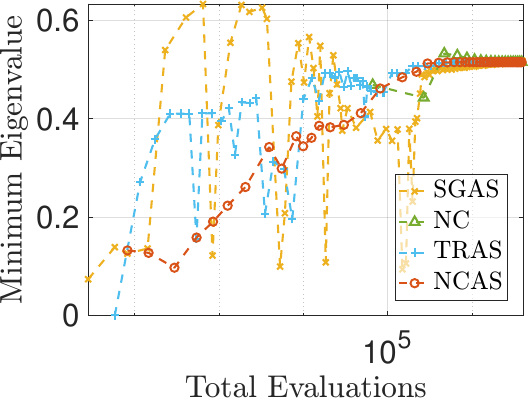}
    \end{subfigure}
    ~
    \begin{subfigure}{0.3\textwidth}
        \centering
        \includegraphics[width=\textwidth]{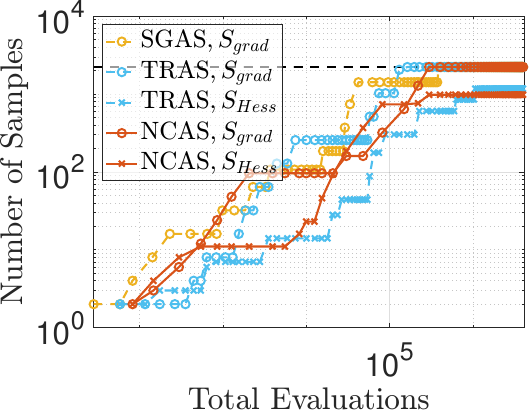}
    \end{subfigure}
    ~
    \begin{subfigure}{0.32\textwidth}
        \centering
        \includegraphics[width=\textwidth]{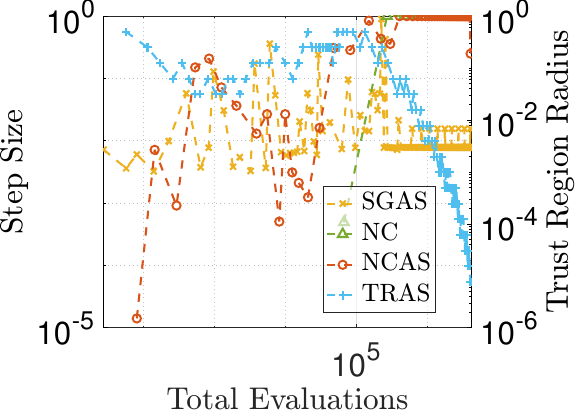}
    \end{subfigure}}
    \caption{Performance of \texttt{SGAS}, \texttt{NC}, \texttt{TRAS}, and \texttt{NCAS} on robust regression problem in terms of gradient norm, minimum eigenvalue, sample size, and step size/trust region radius with respect to total evaluations. First row \texttt{mushroom} dataset; Second row \texttt{splice} dataset.    }
    \label{fig.rr.mush}
\end{figure}

\newpage
\clearpage
\newpage

Next, we illustrate the performance of the four methods using the same datasets on the Tukey Biweight problem (Figures~\ref{fig.tb.aus} and \ref{fig.tb.mush}). Overall, the behavior of the methods is similar to that on the robust regression problems. Namely, the use of adaptive sampling allows for the frugal use of samples in the approximations employed, and the benefits of exploiting negative curvature are clear. That said, for these problems, the benefits are more pronounced. 
Our proposed algorithm \texttt{NCAS} strikes a good balance between the efficiency of samples used and the convergence speed and quality. It appears to efficiently converge to second-order stationary points for these three instances.

\begin{figure}[]
    \renewcommand\arraystretch{1.2}
    \centering\resizebox{\columnwidth}{!}{
    \begin{subfigure}{0.3\textwidth}
        \centering
        \includegraphics[width=\textwidth]{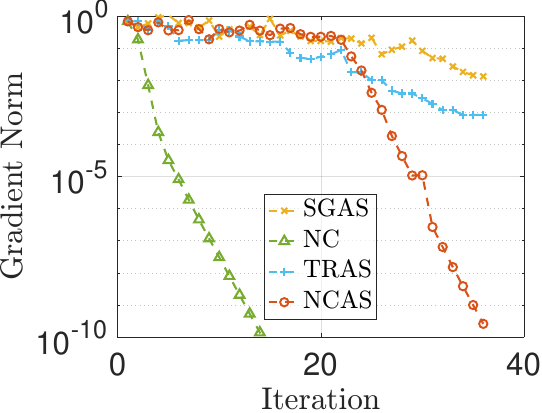}
    \end{subfigure}
    ~
    \begin{subfigure}{0.30\textwidth}
        \centering
        \includegraphics[width=\textwidth]{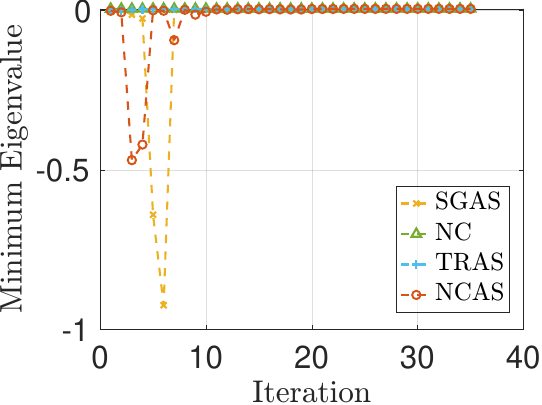}
    \end{subfigure}
    ~
    \begin{subfigure}{0.30\textwidth}
        \centering
        \includegraphics[width=0.99\textwidth]{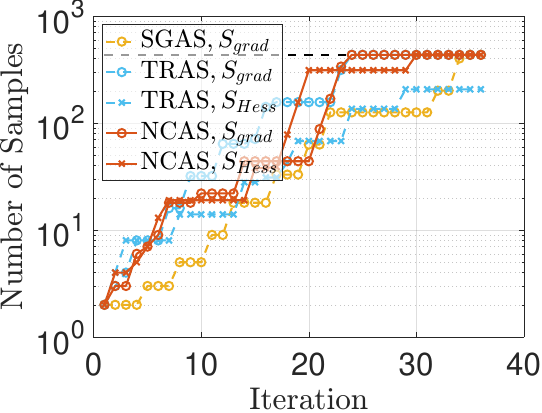}
    \end{subfigure}
    ~
    \begin{subfigure}{0.32\textwidth}
        \centering
        \includegraphics[width=\textwidth]{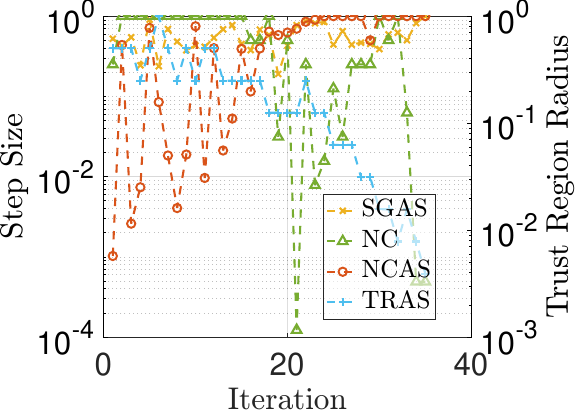}
    \end{subfigure}} \\
    \renewcommand\arraystretch{1.2}
    \centering\resizebox{\columnwidth}{!}{
    \begin{subfigure}{0.3\textwidth}
        \centering
        \includegraphics[width=\textwidth]{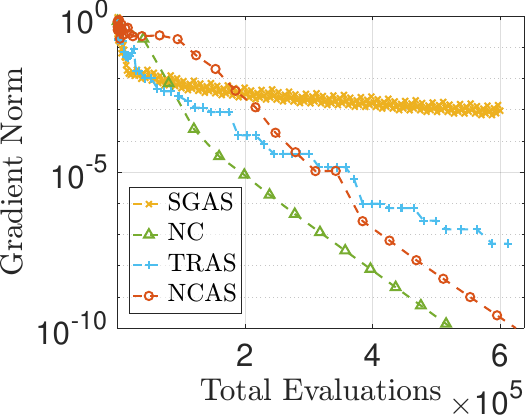}
    \end{subfigure}
    ~
    \begin{subfigure}{0.30\textwidth}
        \centering
        \includegraphics[width=\textwidth]{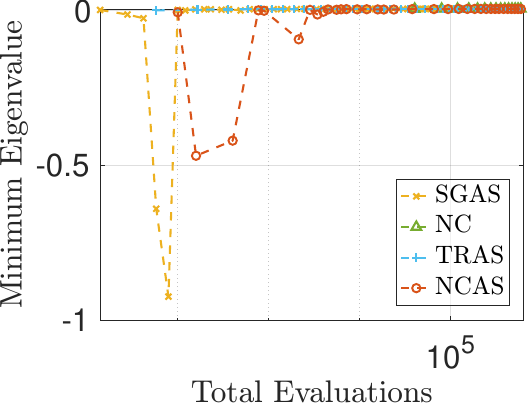}
    \end{subfigure}
    ~
    \begin{subfigure}{0.30\textwidth}
        \centering
        \includegraphics[width=0.99\textwidth]{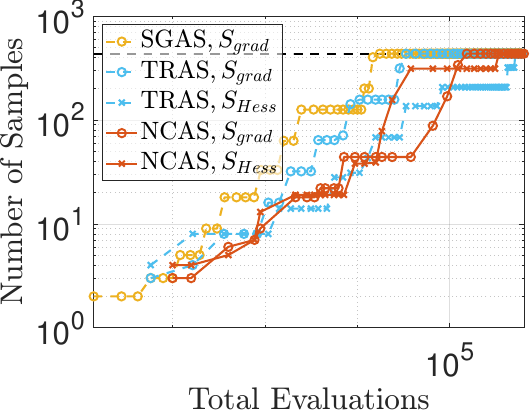}
    \end{subfigure}
    ~
    \begin{subfigure}{0.32\textwidth}
        \centering
        \includegraphics[width=\textwidth]{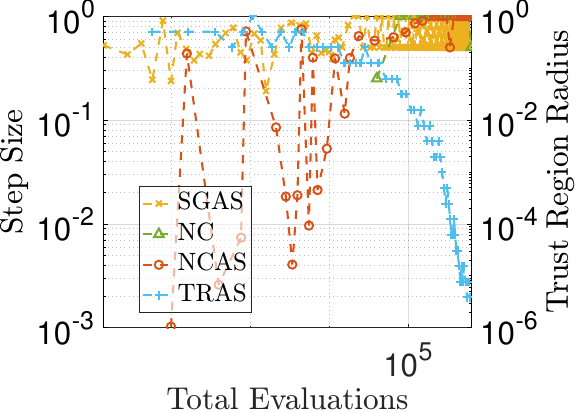}
    \end{subfigure}
    }
    \caption{Performance of \texttt{SGAS}, \texttt{NC}, \texttt{TRAS}, and \texttt{NCAS} on Tukey Biweight  problem (\texttt{australian} dataset) in terms of gradient norm, minimum eigenvalue, sample size, and step size/trust region radius. First row iterations; Second row total evaluations. 
    }\label{fig.tb.aus}
\end{figure}

\begin{figure}[h]    
    \renewcommand\arraystretch{1.2}
    \centering\resizebox{\columnwidth}{!}{
    \begin{subfigure}{0.295\textwidth}
        \centering
        \includegraphics[width=\textwidth]{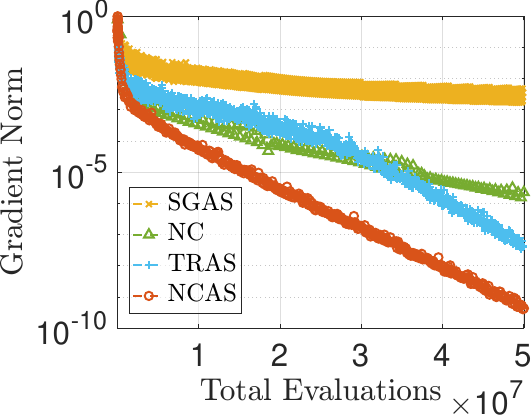}
    \end{subfigure}
    ~
    \begin{subfigure}{0.30\textwidth}
        \centering
        \includegraphics[width=\textwidth]{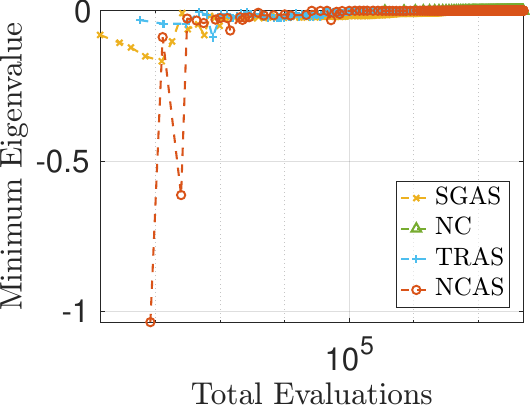}
    \end{subfigure}
    ~
    \begin{subfigure}{0.305\textwidth}
        \centering
        \includegraphics[width=\textwidth]{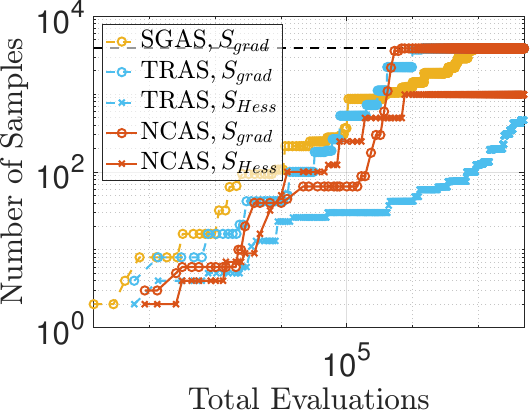}
    \end{subfigure}
    ~
    \begin{subfigure}{0.32\textwidth}
        \centering
        \includegraphics[width=\textwidth]{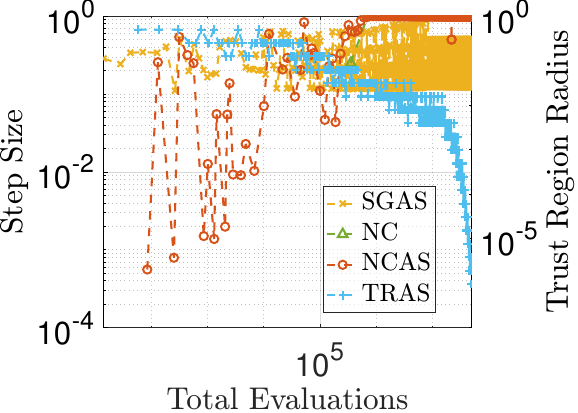}
    \end{subfigure}}\\
    
    \renewcommand\arraystretch{1.2}
    \centering\resizebox{\columnwidth}{!}{
    \begin{subfigure}{0.31\textwidth}
        \centering
        \includegraphics[width=0.95\textwidth]{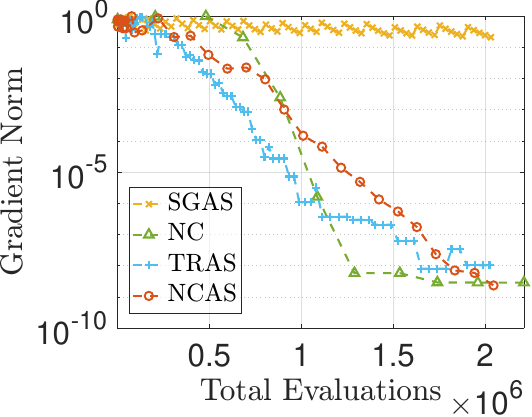}
    \end{subfigure}
    ~
    \begin{subfigure}{0.29\textwidth}
        \centering
        \includegraphics[width=\textwidth]{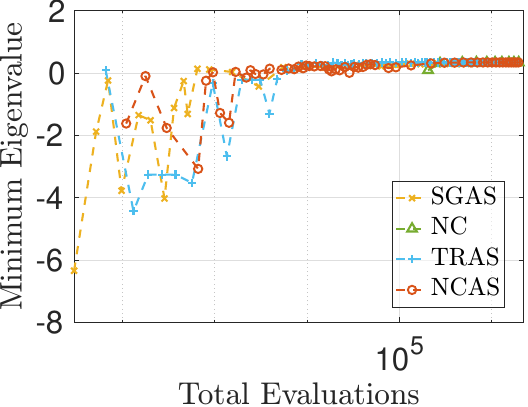}
    \end{subfigure}
    ~
    \begin{subfigure}{0.31\textwidth}
        \centering
        \includegraphics[width=0.95\textwidth]{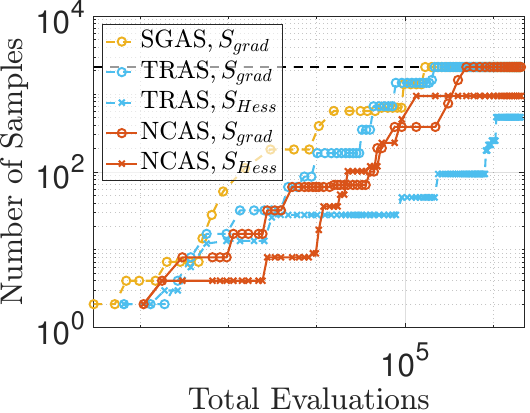}
    \end{subfigure}
    ~
    \begin{subfigure}{0.315\textwidth}
        \centering
        \includegraphics[width=\textwidth]{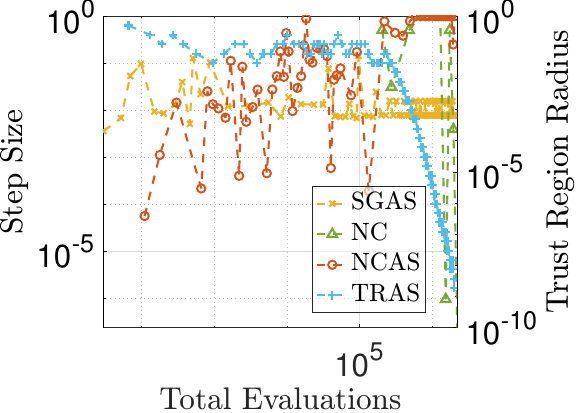}
    \end{subfigure}}
    \caption{Performance of \texttt{SGAS}, \texttt{NC}, \texttt{TRAS}, and \texttt{NCAS} on Tukey Biweight problem in terms of gradient norm, minimum eigenvalue, sample size, and step size/trust region radius with respect to total evaluations. First row \texttt{mushroom} dataset; Second row \texttt{splice} dataset.
    }\label{fig.tb.mush}
\end{figure}

\section{Final Remarks} \label{sec.fin_rem}

We have designed and analyzed a two-step method that incorporates 
both negative curvature and gradient information for minimizing general unconstrained smooth nonlinear optimization problems, applicable to both the deterministic inexact and stochastic settings. Our approach, under specific assumptions and conditions on the approximations employed, is endowed with convergence and complexity guarantees. Additionally, we have designed a practical variant of the method that utilizes the conjugate gradient method with negative curvature detection to compute a step, an adaptive sampling mechanism to decide on the approximate gradient and Hessian quality, and a dynamic step size selection procedure. Numerical experiments conducted on standard nonconvex regression problems highlight the efficiency, efficacy, and robustness of the proposed method.

\newpage
\bibliographystyle{plain}
\bibliography{references}


\end{document}